\definecolor{aleacolor}{rgb}{0,0,0}
\theoremstyle{plain}
\newtheorem{theorem}{Theorem}[section]                                          
\newtheorem{lemma}[theorem]{Lemma}
\theoremstyle{definition}
\theoremstyle{remark}
\makeatletter \@addtoreset{equation}{section} \makeatother
\renewenvironment{proof}{\noindent{\textit{Proof:}}}{\hspace{2mm} $\square$}
\newcommand{\N}{\mathbf{N}}
\newcommand{\Z}{\mathbf{Z}}
\newcommand{\R}{\mathbf{R}}
\newcommand{\ind}{\mathbf{1}}
\newcommand{\ep}{\epsilon}
\newenvironment{demo}[1]{\noindent{\textit{Proof of #1.}}}{\hspace*{2mm}~$\square$}
\DeclareMathOperator{\card}{card \,}
\DeclareMathOperator{\bernoulli}{Bernoulli \,}
\DeclareMathOperator{\binomial}{Binomial \,}
\DeclareMathOperator{\exponential}{Exponential \,}
\begin{document}

\author{Eric Foxall}
\address{Department of Mathematics, University of Victoria, Victoria, BC V8X 1P3, Canada and School of Mathematical and Statistical Sciences, Arizona State University, Tempe, Arizona, AZ 85287-1804, USA.}
\email{eric.foxall@asu.edu}

\author{Nicolas Lanchier}
\address{School of Mathematical and Statistical Sciences, Arizona State University, Tempe, Arizona, AZ 85287-1804, USA.}
\email{nlanchie@asu.edu}


\title[Death and birth of the fittest]{Evolutionary games on the lattice: \\ death and birth of the fittest}

\begin{abstract}
 This paper investigates the long-term behavior of an interacting particle system of interest in the hot topic of evolutionary game theory.
 Each site of the~$d$-dimensional integer lattice is occupied by a player who is characterized by one of two possible strategies.
 Following the traditional modeling approach of spatial games, the configuration is turned into a payoff landscape that assigns a payoff to each player based on her strategy and the
 strategy of her neighbors.
 The payoff is then interpreted as a fitness, by having each player independently update their strategy at rate one by mimicking their neighbor with the largest payoff.
 With these rules, the mean-field approximation of the spatial game exhibits the same asymptotic behavior as the popular replicator equation.
 Except for a coexistence result that shows an agreement between the process and the mean-field model, our analysis reveals that the two models strongly disagree in many aspects,
 showing in particular that the presence of a spatial structure in the form of local interactions plays a key role.
 More precisely, in the parameter region where both strategies are evolutionary stable in the replicator equation, in the spatial model either one strategy wins or the system fixates
 to a configuration where both strategies are present.
 In addition, while defection is evolutionary stable for the prisoner's dilemma game in the replicator equation, space favors cooperation in our model.
\end{abstract}
 \subjclass[2000]{60K35, 91A22} 
\keywords{Interacting particle systems, evolutionary game theory, evolutionary stable strategy, replicator equation, prisoner's dilemma, cooperation.}
\maketitle


\section{Introduction}
\label{sec:intro}

\noindent There are a number of spatially explicit models of community dynamics, i.e., models where individuals are located on a geometrical
 structure and can only interact with nearby individuals, for which the long-term behavior has been proved to strongly differ from that of
 their non-spatial counterparts.
 See for instance~\cite{durrett_levin_1994, neuhauser_2001} for a discussion on this aspect.
 This shows that space is often a key component in how communities are shaped.
 This paper studies an interacting particle system of evolutionary games which turns out to be a good illustration of the fact that space matters
 in the sense that the results we collect show significant discrepancies between the spatially explicit stochastic model and its so-called
 mean-field version. \\
\indent Evolutionary game theory was proposed by theoretical biologist Maynard Smith and first appeared in~\cite{maynardsmith_price_1973}.
 Although evolutionary games were originally introduced to understand animal conflicts, they now have a number of important applications in
 various branches of social sciences.
 From a mathematical point of view, the stochastic process we consider is an example of interacting particle system (\cite{liggett_1985, liggett_1999})
 where each vertex of the~$d$-dimensional integer lattice is occupied by a player, who is characterized by her strategy.
 Thus, the state of the system at time~$t$ is a spatial configuration
 $$ \xi_t : \Z^d \to \{1, 2, \ldots, n \} = \hbox{set of strategies}. $$
 The dynamics depends on a payoff matrix~$A = (a_{ij})$ where $a_{ij}$ denotes the payoff of a type~$i$ player interacting with a type~$j$ player.
 In the presence of two strategies, which we assume from now on, there are exactly~$4! = 24$ possible orderings of the four payoffs.
 Since there is symmetry under exchange of type labels, we may assume~$a_{12} \le a_{21}$, which gives twelve possible games or strategic situations.
 The most popular such games are the prisoner's dilemma, the hawk-dove game and the stag hunt game.
 To define the evolution rules, we let~$M$ be an integer referred to as the range of the interactions and let
 $$ \begin{array}{rcl}
               N_x & = & \{z \neq x : \max_{i = 1, 2, \ldots, d} \,|z_i - x_i| \leq M \} \vspace*{4pt} \\
      f_i (x, \xi) & = & \card \{z \in N_x : \xi (z) = i \} / ((2M + 1)^d - 1) \end{array} $$
 be respectively the interaction neighborhood of site~$x$ and the fraction of neighbors of that site following strategy~$i$.
 Having a payoff matrix and a spatial configuration induces a so-called payoff landscape that assigns a payoff to each player on the infinite integer lattice by setting
 $$ \phi (x, \xi) = a_{i1} \,f_1 (x, \xi) + a_{i2} \,f_2 (x, \xi) \quad \hbox{when} \quad \xi (x) = i. $$
 The basic idea and starting point of evolutionary game theory is to turn the game into a dynamical system by interpreting the payoff as fitness or
 reproduction success, meaning that the type or strategy of a player is more likely to be selected at each interaction as her payoff gets larger.
 Specifically, the model considered in this paper evolves according to the following simple rules:
\begin{list}{\labelitemi}{\leftmargin=2.25em}
 \item[4.] Players update their strategy at rate one by mimicking their neighbor with the largest payoff.
           In case of a tie, meaning that two neighbors with different strategies share the same highest payoff, the new strategy is chosen uniformly at
           random by flipping a fair coin.
\end{list}
 This model is chronologically the last one of a series of four that we have studied so far.
 The first three models are built from the following rules.
\begin{list}{\labelitemi}{\leftmargin=2.25em}
 \item[1.] {\bf Payoff affecting birth and death rates}~\cite{lanchier_2015}.
                When a player has a positive payoff, at rate this payoff, one of her neighbors chosen at random adopts her strategy, whereas when her
                payoff is negative, at rate minus this payoff,
                she adopts the strategy of one of her neighbors chosen at random.
                This updating rule is inspired from~\cite{brown_hansell_1987}. \vspace*{2pt}
 \item[2.] {\bf Best-response dynamics}~\cite{evilsizor_lanchier_2014}.
                Players update their strategy at rate one in order to maximize their payoff, which depends on the strategy of their neighbors. \vspace*{2pt}
 \item[3.] {\bf Death-birth updating process}~\cite{evilsizor_lanchier_2016}.
                Players update their strategy at rate one by mimicking a neighbor chosen at random according to probabilities that are proportional to
                the neighbors' payoff.
                The weak selection approximation of this process, which is a voter model perturbation, has first been studied heuristically
                in~\cite{ohtsuki_al_2006} and then analytically in~\cite{chen_2013, cox_durrett_perkins_2013} using duality techniques for interacting particle systems.
\end{list}
 Note that the model we consider in this paper can be viewed as a nonlinear version of the death-birth updating process in the sense that, like in the death-birth
 process, players update their strategy at rate one by mimicking one of their neighbors, but unlike in the death-birth process, they always choose
 to imitate their fittest neighbor.
 In spite of their similarities, the two models exhibit different behaviors and the approaches to study them strongly differ.
 To turn our verbal description of the evolution rules into equations, we define
 $$ \Phi_i (x, \xi) = \sup \,\{\phi (z, \xi) : z \in N_x \ \hbox{and} \ \xi (z) = i \} \quad \hbox{for} \quad i = 1, 2, $$
 which by convention~$= - \infty$ when there is no type~$i$ neighbor.
 This random variable keeps track of the largest payoff among all the neighbors of site~$x$ who follow strategy~$i$.
 Then, the transition rates at vertex~$x$ are given by
\begin{equation}
\label{eq:ips-rates}
  \begin{array}{rcl}
    1 \ \to \ 2 & \ \hbox{at rate} \ & \ind \{\Phi_1 (x, \xi) < \Phi_2 (x, \xi) \} \vspace*{4pt} \\ && \qquad + \ (1/2) \,\ind \{\Phi_1 (x, \xi) = \Phi_2 (x, \xi) \} \vspace*{8pt} \\
    2 \ \to \ 1 & \ \hbox{at rate} \ & \ind \{\Phi_1 (x, \xi) > \Phi_2 (x, \xi) \} \vspace*{4pt} \\ && \qquad + \ (1/2) \,\ind \{\Phi_1 (x, \xi) = \Phi_2 (x, \xi) \}. \end{array}
\end{equation}
 In each transition, the first indicator function means that the player always mimics her fittest neighbor while the second indicator function means that,
 in case of a tie, she chooses her new strategy at random by tossing a fair coin.


\section{Main results}
\label{sec:results}

\noindent Since one of our main objectives is to understand and quantify the role of space in the interactions among players, the first step is to briefly study the mean-field model.
 Letting~$u_i$ denote the frequency of players following strategy~$i$ and assuming that the population is well-mixing gives the differential equation
 $$ \begin{array}{l}
     u_1' (t) = - \ u_1 \,\ind \{a_{11} u_1 + a_{12} u_2 > a_{21} u_1 + a_{22} u_2 \} \vspace*{4pt} \\ \hspace*{50pt}
                + \ u_2 \,\ind \{a_{11} u_1 + a_{12} u_2 < a_{21} u_1 + a_{22} u_2 \}. \end{array} $$
 To study the mean-field model and describe its bifurcation diagram, it is convenient to adopt the approach of~\cite{lanchier_2013} by introducing the new parameters
 $$ a_1 = a_{11} - a_{21} \quad \hbox{and} \quad a_2 = a_{22} - a_{12} $$
 and declaring strategy~$i$ to be
\begin{list}{\labelitemi}{\leftmargin=1.75em}
 \item {\bf altruistic} when~$a_i < 0$, i.e., a player with strategy~$i$ confers a lower payoff to a player with the same strategy than the other strategy, \vspace*{2pt}
 \item {\bf selfish} when~$a_i > 0$, i.e., a player with strategy~$i$ confers a higher payoff to a player with the same strategy than the other strategy.
\end{list}
 In terms of~$a_1$ and~$a_2$, the mean-field model reduces to
 $$ \frac{du_1}{dt} \ = \ \left\{\begin{array}{ccl} - u_1 & \hbox{when} & a_1 u_1 > a_2 u_2 \vspace*{2pt} \\
                                                        0 & \hbox{when} & a_1 u_1 = a_2 u_2 \vspace*{2pt} \\
                                                      u_2 & \hbox{when} & a_1 u_1 < a_2 u_2. \end{array} \right. $$
 Using that~$u_1 + u_2 = 1$ gives the three fixed points
 $$ e_1 = 1 \quad \hbox{and} \quad e_2 = 0 \quad \hbox{and} \quad e_* = \frac{a_2}{a_1 + a_2} $$
 whose global stability is as follows:
\begin{list}{\labelitemi}{\leftmargin=1.75em}
 \item strategy~$i$ wins if it is selfish and the other strategy altruistic:
       $$ \begin{array}{l} e_* \notin (0, 1) \quad \hbox{and} \quad \lim_{t \to \infty} u_i (t) = 1 \ \ \hbox{for all} \ \ u_i (0) \in (0, 1), \end{array} $$
 \item when both strategies are altruistic, coexistence occurs:
       $$ \begin{array}{l} e_* \in (0, 1) \quad \hbox{and} \quad \lim_{t \to \infty} u_1 (t) = e_* \ \ \hbox{for all} \ \ u_i (0) \in (0, 1), \end{array} $$
 \item when both strategies are selfish, the system is bistable:
       $$ \begin{array}{rcl}
            \lim_{t \to \infty} u_1 (t) = 0 & \hbox{for all} & u_1 (0) < e_* \in (0, 1) \vspace*{4pt} \\
            \lim_{t \to \infty} u_1 (t) = 1 & \hbox{for all} & u_1 (0) > e_* \in (0, 1). \end{array} $$
\end{list}
 Recalling that a strategy is said to be evolutionary stable if, when adopted by a population, it cannot be invaded by any alternative strategy starting at an
 infinitesimally small frequency, the dynamics of the mean-field model can be summarized as: a strategy is evolutionary stable if and only if it is selfish. \\
\begin{figure}[t]
 \centering
\scalebox{0.50}{\input{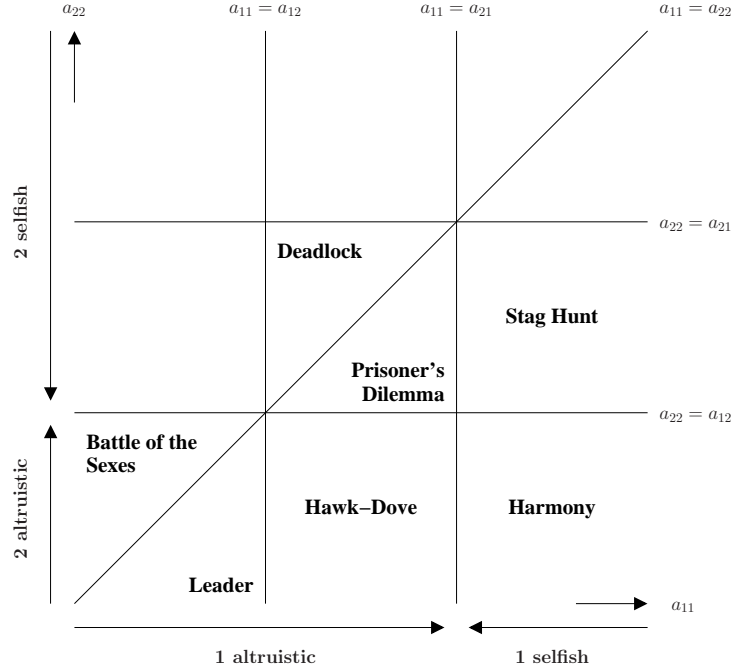}}
 \caption{\upshape List of the most popular two-person symmetric games.}
\label{fig:games}
\end{figure}
\indent We now turn our attention to the spatial model~\eqref{eq:ips-rates}.
 To visualize our results, we turn the four-dimensional parameter space into a two-dimensional picture by fixing the payoffs~$a_{12}$ and~$a_{21}$.
 The five straight lines
 $$ \{a_{11} = a_{12} \} \quad \{a_{11} = a_{21} \} \quad \{a_{22} = a_{12} \} \quad \{a_{22} = a_{21} \} \quad \{a_{11} = a_{22} \} $$
 induce a partition of the set of parameters into twelve regions corresponding to the twelve possible strategic relationships.
 These twelve regions are shown in Figure~\ref{fig:games} along with the names of the most popular games in the~$a_{11} - a_{22}$ plane under the assumption~$a_{12} < a_{21}$.
 Our main results stated and explained below are summarized in the two phase diagrams of Figures~\ref{fig:diagram-2D}--\ref{fig:diagram-1D}. \\
\indent To begin with, we look at parameter regions where at least one of the two strategies is selfish.
 To understand the spatial model in this case, we observe that, when the minimum payoff of a type~1 player exceeds the maximum payoff of a type~2 player over all possible configurations, i.e.,
\begin{equation}
\label{eq:bootstrap-1}
  \min \,(a_{11}, a_{12}) > \max \,(a_{21}, a_{22}),
\end{equation}
 the set of type~1 players becomes a pure growth process: type~1 players never change their strategy whereas each type~2 player with at least one type~1 neighbor changes her strategy at rate one.
 This implies that strategy~1 wins, in agreement with the mean-field model.
 This result can be improved upon by comparing the set of players for whom they and their neighbors eventually maintain strategy~1 to the limit of a certain bootstrap percolation process.
\begin{theorem}
\label{th:bootstrap}
 Assume that the process starts from a translation invariant product measure with a positive density of each type. Then,
\begin{enumerate}
 \item there is fixation to a configuration with a positive density of strategy~1 (in particular, strategy~2 cannot win) when
       $$ a_{11} > \max \,(a_{21}, a_{22}) $$
 \item strategy~1 wins when
       $$ (N - 1) \,a_{11} + \min \,(a_{11}, a_{12}) > N \max \,(a_{21}, a_{22}) $$
       where~$N = \card (N_x) = (2M + 1)^d - 1$ is the neighborhood size.
\end{enumerate}
\end{theorem}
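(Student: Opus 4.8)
The plan is to exploit a single \emph{payoff comparison}. Whatever the configuration, a type~$2$ player can never earn more than~$\max(a_{21},a_{22})$, since~$\phi(z,\xi)=a_{21}f_1(z,\xi)+a_{22}f_2(z,\xi)$ is a convex combination of~$a_{21}$ and~$a_{22}$; on the other hand a type~$1$ player with at most one type~$2$ neighbor earns at least~$((N-1)a_{11}+\min(a_{11},a_{12}))/N$, because its payoff~$a_{11}+(a_{12}-a_{11})f_2$ is worst when~$f_2=1/N$ exactly in the case~$a_{12}<a_{11}$. Call a type~$1$ site \emph{internal} if all~$N$ of its neighbors are type~$1$ (so its payoff is~$a_{11}$) and \emph{almost internal} if it has at most one type~$2$ neighbor. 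The heart of the argument is a \emph{locking lemma}: under~$a_{11}>\max(a_{21},a_{22})$ an internal site~$y$ never flips, since it has no type~$2$ neighbor and hence~$\Phi_2(y,\xi)=-\infty$, while every neighbor~$x$ of~$y$ satisfies~$\Phi_1(x,\xi)\ge\phi(y,\xi)=a_{11}>\max(a_{21},a_{22})\ge\Phi_2(x,\xi)$, so by~\eqref{eq:ips-rates} such an~$x$ can never switch to type~$2$ and switches to type~$1$ at rate one if it is type~$2$. Consequently the neighbors of~$y$ stay on strategy~$1$ forever, hence~$y$ stays internal forever: \emph{once a site is internal, it and its whole neighborhood are frozen on strategy~$1$}, and the set of internal sites is nondecreasing in time.

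For part~(1) I would combine this with the initial law. Starting from a translation invariant product measure with type~$1$ density~$p>0$, a given site is internal at time~$0$ with probability~$p^{N+1}>0$, so by ergodicity the time~$0$ internal sites have density~$p^{N+1}>0$ almost surely. Since this set only grows and keeps its neighbors on strategy~$1$, the density of strategy~$1$ stays bounded below by~$p^{N+1}$ for all time, which already rules out a win by strategy~$2$. To upgrade this to genuine fixation I would pass to the monotone family of eventually-internal sites and argue that any site not eventually frozen on strategy~$1$ lies outside the closure of the internal seeds; comparing the positions of the remaining \emph{active} sites to the complement of a bootstrap percolation started from the internal seeds should show that each site is active only for a bounded random time, so every site flips finitely often.

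For part~(2) the strengthened inequality~$(N-1)a_{11}+\min(a_{11},a_{12})>N\max(a_{21},a_{22})$ makes an \emph{almost} internal site beat every type~$2$ player, which is exactly what lets the frozen region \emph{grow}. A neighbor of an almost internal site~$y$ cannot switch to type~$2$ and, if it is type~$2$, is converted; in particular the unique type~$2$ neighbor of~$y$ is converted, after which~$y$ is genuinely internal and part~(1) takes over. The plan is to make this cascade precise as a bootstrap percolation: declare a site occupied once it and all its neighbors are permanently on strategy~$1$, show the occupied set contains the time~$0$ internal sites and is enlarged by a fixed local rule, and then invoke the corresponding bootstrap percolation result to conclude that, from the positive density seed produced above, the occupied set covers all of~$\Z^d$ almost surely, which is precisely the assertion that strategy~$1$ wins.

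The main obstacle is the growth step of part~(2). Unlike the pure growth regime~\eqref{eq:bootstrap-1}, where a single type~$1$ site already beats every type~$2$ player and conversion spreads immediately, here only sites with a nearly complete type~$1$ neighborhood are strong, so one must check that converting the boundary of a frozen block regenerates almost internal sites quickly enough that the bootstrap process does not stall. This is delicate in dimension~$d\ge2$, where a flat interface need not advance in the worst case, and the comparison must be arranged so that the relevant bootstrap threshold still percolates from positive density. Establishing this self-sustaining property of the frozen region, together with the fixation claim in part~(1), is where essentially all the work lies; the payoff comparison and the locking lemma are routine once the definitions of internal and almost internal sites are in place.
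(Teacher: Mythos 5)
Your treatment of part (1) is exactly the paper's: your \emph{internal} sites are its 1-centers, your locking lemma is its Lemma~\ref{lem:bootstrap-weak}, and the bound $p^{N+1}$ on the initial density of internal sites is precisely how it proves part (1); note the paper, like you, only establishes a persisting positive density of frozen 1s (which is what rules out a win by strategy 2), so your extra speculation about upgrading to full fixation is not needed to match it. Part (2), however, has a genuine gap, and it sits exactly where you place the ``main obstacle.'' Your growth mechanism rests on the implicit claim that an almost internal site $y$ keeps strategy 1 until its unique type-2 neighbor converts. That claim is false in general: by \eqref{eq:ips-rates} the flip rate of $y$ is determined by comparing $\Phi_1(y,\xi)$ and $\Phi_2(y,\xi)$, i.e., by the payoffs of $y$'s \emph{neighbors} --- $y$'s own large payoff is irrelevant to $y$'s own update. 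Almost-internality does not make any of $y$'s type-1 neighbors strong, since for $d \geq 2$ or $M \geq 2$ their neighborhoods stick out of $\bar N_y$ and can be packed with 2s. Concretely, take $d = 2$, $M = 1$, $a_{11} = 1$, $a_{12} = -1$, $a_{21} = a_{22} = 0.7$, so that \eqref{eq:bootstrap-strong} reads $6 > 5.6$ and holds; if $y$'s only type-2 neighbor is a corner of $\bar N_y$ and every site outside $\bar N_y$ adjacent to $N_y$ is of type 2, then every type-1 neighbor of $y$ has payoff at most $1/4$, while the type-2 corner has payoff $0.7$, so $\Phi_1(y,\xi) < \Phi_2(y,\xi)$ and $y$ flips to strategy 2 at rate one. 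Hence almost-internality is not a stable property, the set you declare occupied is not enlarged by any fixed local rule expressible in those terms, and the race between $y$'s own update and the corner's update can be lost.

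The missing idea is the paper's Lemma~\ref{lem:bootstrap-strong}, which chooses the growth rule so that both locking and almost-internality follow from part (1): require that for every coordinate direction $i$ the pair $\{x - e_i, x + e_i\}$ contain a 1-center. Then (i) $x$ is a neighbor of a 1-center, so by Lemma~\ref{lem:bootstrap-weak} it can never flip to 2; (ii) the union of the closed neighborhoods of these $d$ 1-centers covers all of $\bar N_x$ except a single corner $x_+$ (the computation $\bigcup_i (\bar N_x \cap \bar N_{x - e_i}) = \bar N_x \setminus \{x_+\}$), so $x$ is \emph{permanently} almost internal with payoff at least $((N-1)a_{11} + \min(a_{11},a_{12}))/N > \max(a_{21},a_{22})$; (iii) at the corner's next update, an exponential time of rate one, the corner must adopt strategy 1 because $x$ is among its neighbors, after which $x$ is a 1-center forever. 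This rule --- become occupied when there is an occupied neighbor in each coordinate direction --- is exactly the modified bootstrap percolation of \cite{schonmann_1992}, whose critical density is zero, and the stochastic domination of the initial conditions ($\eta_0$ dominates a product measure of density $\rho^{N+1}$) finishes the proof. Incidentally, your worry about flat interfaces is correct for this rule (half-spaces do not advance) but harmless, because Schonmann's theorem concerns precisely the positive-density product initial conditions that arise here.
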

\noindent By symmetry, we also have that
\begin{itemize}
 \item there is fixation to a configuration with a positive density of strategy~2 (in particular, strategy~1 cannot win) when
       $$ a_{22} > \max \,(a_{11}, a_{12}) $$
 \item strategy~2 wins when
       $$ (N - 1) \,a_{22} + \min \,(a_{21}, a_{22}) > N \max \,(a_{11}, a_{12}). $$
\end{itemize}
\begin{figure}[t]
 \centering
\scalebox{0.40}{\input{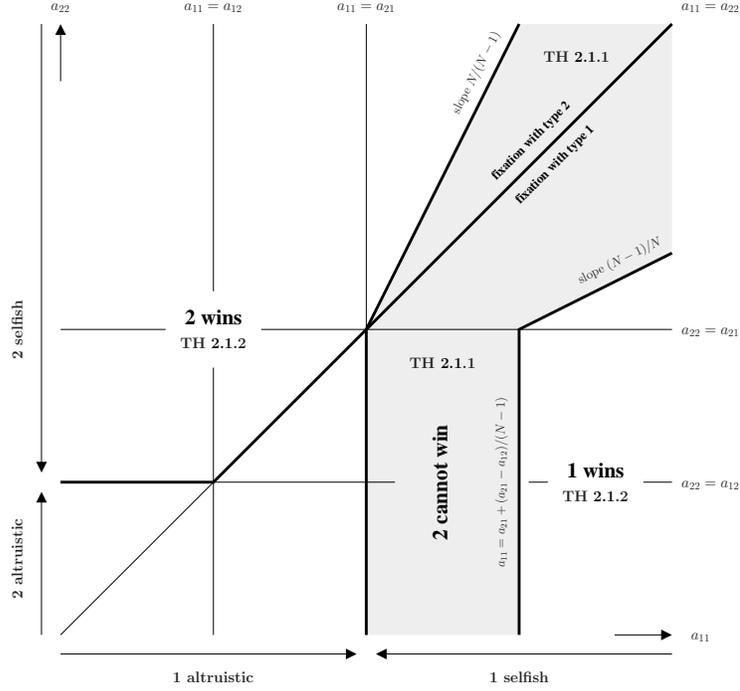}}
 \caption{\upshape Phase diagram of the death and birth of the fittest process in the~$a_{11} - a_{22}$ plane under the assumption that~$a_{12} < a_{21}$ showing the parameter regions covered
  by Theorem~\ref{th:bootstrap} delimited by the thick solid lines.}
\label{fig:diagram-2D}
\end{figure}
 See Figure~\ref{fig:diagram-2D} for a picture of the phase diagram in the~$a_{11} - a_{22}$ plane.
 Note that the parameter regions not covered by the theorem are
\begin{itemize}
 \item the region where both strategies are altruistic corresponding to the battle of the sexes, the leader game and the hawk-dove game, and \vspace*{2pt}
 \item the triangular region corresponding to the prisoner's dilemma game,
\end{itemize}
 and we now focus our attention on and collect results for the spatial model in these two more challenging parameter regions in the one-dimensional case. \\
\indent First, when both strategies are altruistic, numerical simulations suggest that coexistence occurs provided the range of the interactions is large.
 The result is difficult to prove in the general case but we were able to show it for the one-dimensional system under some symmetric condition.
\begin{theorem}
\label{th:long-range}
 Assume that
 $$ a_{11} = a_{22} < a_{12} = a_{21} \quad \hbox{and} \quad d = 1 $$
 and that the process starts from a translation invariant product measure with a positive density of each type. Then,
 $$ \begin{array}{l} \liminf_{t \to \infty} P (\xi_t (x) \neq \xi_t (y)) \neq 0 \quad \hbox{for all} \quad x \neq y \ \hbox{and} \ M \ \hbox{large enough}. \end{array} $$
\end{theorem}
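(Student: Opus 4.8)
The plan is to exploit the full symmetry of the payoff matrix to reduce the update rule to a transparent geometric condition, and then to establish two complementary facts for large range: the local density of each strategy is pushed toward $1/2$ (which rules out one strategy winning), and sharp interfaces are unstable and roughen (which rules out clustering). First I would rewrite the rates. Writing $a = a_{11} = a_{22}$, $b = a_{12} = a_{21}$ and $c = b - a > 0$, a type-1 player at $z$ has payoff $b - c\,f_1(z,\xi)$ while a type-2 player has payoff $a + c\,f_1(z,\xi)$, so that
$$ \Phi_1(x,\xi) = b - c \min \{f_1(z,\xi): z \in N_x, \ \xi(z)=1\}, \qquad \Phi_2(x,\xi) = a + c \max \{f_1(z,\xi): z\in N_x, \ \xi(z)=2\}. $$
Setting $m_1(x) = \min \{f_1(z,\xi): z\in N_x, \xi(z)=1\}$ and $m_2(x) = \max \{f_1(z,\xi): z\in N_x, \xi(z)=2\}$, site $x$ moves toward strategy~1 exactly when $m_1(x) + m_2(x) < 1$, toward strategy~2 when $m_1(x)+m_2(x)>1$, and tosses a fair coin when $m_1(x)+m_2(x)=1$. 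In particular the process inherits the $1 \leftrightarrow 2$ symmetry of the game, and it is this symmetry that I expect to force the extremal corrections below to cancel.

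Next I would analyze the local density. In a window over which the empirical density of strategy~1 is close to some $p$, the numbers $f_1(z,\xi)$ concentrate around $p$ with fluctuations of order $M^{-1/2}$; since $m_1$ is a minimum over order-$M$ type-1 neighbors and $m_2$ a maximum over order-$M$ type-2 neighbors, extreme-value estimates give $m_1 \approx p - \delta_M$ and $m_2 \approx p + \delta_M$, so that $m_1 + m_2 \approx 2p$. The corrections are governed by the symmetry between the two strategies and cancel at $p=1/2$, while for $p$ bounded away from $1/2$ the leading term $2p$ fixes the sign. Consequently a site in a region of density $p$ adopts the minority strategy with the larger probability, so the density feels a drift toward $1/2$; this is the spatial analogue of the stable interior equilibrium $e_* = 1/2$ of the mean-field model and it prevents either strategy from invading a large region. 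The same computation applied to the step configuration, carried out explicitly, shows that the two sites straddling a sharp interface are pushed to exchange their strategies, so sharp interfaces are unstable and roughen into mixed zones; this is the opposite of voter-model clustering and is what keeps disagreements from being swept away.

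Finally I would package these two facts into a block construction. Partitioning space-time into boxes of spatial side of order $M$, I would call a box \emph{good} if its configuration is mixed with density of each strategy bounded away from $0$ and $1$. Using the drift toward $1/2$ together with the concentration afforded by the large range, I would show that a good box produces good boxes among its space-time neighbors with probability close to one, so that the set of good boxes dominates a supercritical oriented percolation process. Positivity of the percolation density then gives a location- and time-uniform lower bound on the probability that both strategies appear in any fixed window; by translation invariance this bounds the interface density $P(\xi_t(x) \neq \xi_t(x+1))$ away from zero, and the non-clustering provided by the roughening upgrades this to $\liminf_{t\to\infty} P(\xi_t(x) \neq \xi_t(y)) \neq 0$ for all $x \neq y$.

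The hard part will be the rigorous control of the extremal quantities $m_1$ and $m_2$. Because the rule compares fittest neighbors rather than average payoffs, the dynamics is genuinely non-mean-field, and establishing both the cancellation $m_1 + m_2 \approx 2p$ and the good-to-good propagation requires uniform large-deviation bounds for these minima and maxima over correlated neighborhoods. This is precisely where the restriction to the symmetric one-dimensional setting and to large $M$ is essential: the symmetry produces the cancellation that keeps the drift pointed at $1/2$, and the large range supplies the law of large numbers that makes the block estimates go through.
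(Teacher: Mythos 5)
Your reformulation of the update rule is correct and elegant: with $c=b-a>0$, a site updates toward strategy~1 precisely when $m_1(x)+m_2(x)<1$, and your computation that the two sites straddling a sharp interface are pushed to exchange strategies is also right. The outer shell of your argument (comparison of good space-time boxes with supercritical oriented percolation) is the same as the paper's. However, the core of your proposal --- the propagation step ``a good box produces good boxes with probability close to one'' --- contains a genuine gap, and it is exactly the step the paper's proof is organized to avoid. Your drift-toward-$1/2$ computation is valid only for locally well-mixed configurations, i.e., when all the values $f_1(z)$ in the window concentrate near a common density $p$; nothing in the dynamics guarantees that well-mixedness is preserved (the process is neither attractive nor monotone, pure blocks of width larger than $2M$ are frozen except at their edges, and a box whose overall density is near $1/2$ can be badly segregated). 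Moreover, precisely in the regime your good boxes live in --- density bounded away from $0$ and $1$ --- the drift you identified is weakest: near $p=1/2$ it is of the same order as the extreme-value fluctuations, so keeping a box mixed for a macroscopic time $T$ requires a stochastic-stability argument that you have not supplied, and the single-flip interface computation does not by itself show that mixed zones grow rather than merely flicker.

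The paper sidesteps density control entirely by working in the opposite, sparse-minority regime, where the payoff comparisons are clean and monotone. Its good event only asks for at least $\log (M)$ players of each type in an interval of length $7M$, and propagation is reduced to four elementary lemmas: (i) among $\log (M)$ minority players, at least one keeps her strategy up to time $T$ (rate-one clocks); (ii) as long as the target interval contains no type-1 player, the rightmost type-1 player to its left has, up to ties, the largest payoff among her neighbors --- when the minority is locally very sparse her payoff is close to $a_{12}=a_{21}$, the maximum possible --- so strategy 1 advances by jumps of length $M$ at rate at least $1/2$; (iii) once a type-1 player enters the target interval, the same sparse-minority comparison shows every update in her neighborhood adopts strategy 1 until $\sqrt{M}$ type-1 players are present; (iv) a binomial large-deviation estimate keeps $\log (M)$ of them until time $T$. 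These comparisons need only that the minority count is $o(M)$, never that the bulk density is controlled or that the configuration is homogeneous. To rescue your approach you would have to prove propagation of local well-mixedness under the dynamics, which is essentially a new and harder theorem; weakening your notion of good box to a sparse one instead leads you back to the paper's argument.
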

\noindent To establish the theorem, we will show that if an interval of length~$7M$ has at least~$\log (M)$ players of each type then, with probability close to one,
 a certain translation of this interval will satisfy the same property after some fixed deterministic time.
 In addition to large deviation estimates to prove that the number of representatives of a given strategy in a given interval cannot vary too fast, the key ingredient is
 that, as long as the target interval does not contain any type~$i$ player, the rightmost type~$i$ player to the left of this interval is the fittest neighbor of all her
 neighbors with the possibility of a tie.
 This will be used to show that the minority type can spread and invade nearby regions. \\
\indent To understand the role of space in the prisoner's dilemma game, we now specialize in the one-dimensional system with nearest neighbor interactions.
 In this case, the process can be studied by looking at the interfaces between blocks of type~1 players and blocks of type~2 players. \\
\indent To begin with, we look at the system starting from an arbitrary configuration with a finite interval of players following strategy~1 not limited to translation invariant product measures.
 A natural question in this case is to determine whether the players in the initial interval of~1s may change their strategy or not.
 Our next result gives a lower bound for the probability that these players following initially strategy~1 never change their strategy under the assumption
\begin{equation}
\label{eq:1D-gold-1}
  a_{11} + a_{12} > a_{22} + \max \,(a_{21}, a_{22}) \quad \hbox{and} \quad 2a_{11} > a_{21} + a_{22},
\end{equation}
 This bound is uniform in all possible initial configurations outside the interval of type~1 players.
 More precisely, let
 $$ \Lambda_m = \{\xi \in \{1, 2 \}^{\Z} : \xi (x) = 1 \ \hbox{for all} \ x \in [-m, m] \cap \Z \} $$
 be the set of all the possible configurations that have a row of~$2m + 1$ players following strategy~1 centered at the origin, and let
 $$ \phi = \frac{1 + \sqrt 5}{2} = \hbox{the golden ratio}. $$
 Then, techniques from martingale theory give the following result.
\begin{theorem}
\label{th:1D-gold}
 Let~$M = d = 1$, $m \geq 1$ and assume~\eqref{eq:1D-gold-1}. Then,
 $$ \begin{array}{l} \inf_{\xi_0 \in \Lambda_m} P (\xi_t \in \Lambda_m \ \hbox{for all} \ t) \geq \displaystyle \bigg(1 - \frac{1}{\phi} \bigg)^2 = \frac{7 - 3 \sqrt 5}{2}. \end{array} $$
\end{theorem}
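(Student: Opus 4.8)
The plan is to reduce the survival of the whole block to two independent ``single-endpoint'' problems and to control each endpoint with a supermartingale built from the golden ratio. First I would note that, as long as the interval $[-m,m]$ consists entirely of $1$s, the only sites of the block that can flip are the two endpoints $-m$ and $m$: every interior site has both neighbors inside the block, sees no type-$2$ neighbor, and hence has flip rate $0$. Thus $\{\xi_t \in \Lambda_m \ \text{for all} \ t\}$ is exactly the event that neither endpoint ever flips. On the graphical representation I would run two auxiliary processes, $\xi^R$ (all sites $\le m-1$ frozen at $1$, sites $\ge m$ evolving under \eqref{eq:ips-rates} with the clocks at the sites $\ge m$) and $\xi^L$ (symmetric, using the clocks at the sites $\le -m$). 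Since $m\ge 1$ these two clock families are disjoint, so the events $E_R=\{$site $m$ never flips in $\xi^R\}$ and $E_L$ are \emph{independent}, with $P(E_R)=P(E_L)$ by symmetry. A short coupling argument shows $E_R\cap E_L$ is contained in the survival event: the influence of the left half-line on the sites $\ge m$ passes only through the value at $m-1$, which equals $1$ in both $\xi$ and $\xi^R$ until the first flip inside the block, so a first block-flip occurring at $m$ would be reproduced in $\xi^R$, contradicting $E_R$. It therefore suffices to prove $P(E_R)\ge 1-1/\phi$.

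For the right endpoint, let $Z_t$ be the distance from $m$ to the nearest type-$2$ site strictly to the right, with $Z_t=+\infty$ if there is none, so that $E_R^c=\{Z_t=0 \ \text{for some} \ t\}$. The heart of the argument is a transition analysis of $Z$ using the two inequalities in \eqref{eq:1D-gold-1}. Because a $1$ turns into a $2$ only next to a fitter $2$, the value $Z$ can decrease (by one) only when the nearest defector is \emph{isolated}: its payoff is then $a_{21}$ and the flip of the adjacent $1$ reduces to $a_{21}>a_{11}$. When instead the nearest defector lies in a block of size $\ge 2$ its payoff is $(a_{21}+a_{22})/2$, and the condition $2a_{11}>a_{21}+a_{22}$ makes the adjacent $1$ strictly fitter, so $Z$ cannot decrease there. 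Symmetrically, the condition $a_{11}+a_{12}>a_{22}+\max(a_{21},a_{22})$ forces the left edge of every $2$-block to be strictly less fit than its neighboring $1$, so that edge always recedes and $Z$ increases at rate one; and an isolated defector disappears at rate one, which sends $Z$ to a value $\ge Z+2$.

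With this picture I would set $h_t=\phi^{-Z_t}$, with the convention $\phi^{-\infty}=0$. In the isolated case the three relevant rate-one transitions move $Z$ to $Z-1$, leave $Z$ unchanged (growth to the right), or send $Z$ to some $Z'\ge Z+2$, so the generator of $h$ is at most $\phi^{-Z}\bigl[(\phi-1)+(\phi^{-2}-1)\bigr]$, which vanishes because $\phi-1=\phi^{-1}$ and $\phi^{-1}+\phi^{-2}=1$; in the block case $Z$ only increases, so $h$ strictly decreases. Hence $h_t$ is a bounded nonnegative supermartingale, equal to $1$ at the break time $\{Z=0\}$ and at most $\phi^{-1}$ at time $0$ since $Z_0\ge 1$ for every $\xi_0\in\Lambda_m$. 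Optional stopping then gives $P(E_R^c)\le\phi^{-1}$ uniformly in the initial configuration, whence $P(E_R)\ge 1-1/\phi$ and finally $P(\xi_t\in\Lambda_m \ \text{for all} \ t)\ge(1-1/\phi)^2$.

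The hard part, and where \eqref{eq:1D-gold-1} does its work, is the exhaustive verification of the transition rules for $Z$: one must run through every local pattern at the interface (isolated defector versus the edge of a $2$-block, together with the boundary cases $Z=1,2$ in which the frozen block itself enters the relevant neighborhood) to confirm that $Z$ can fall only from an isolated defector while the disappearance of that defector always lifts $Z$ by at least two. It is exactly this ``$-1$ versus $+2$'' asymmetry, matched against the identity $\phi^{-1}+\phi^{-2}=1$, that makes $\phi^{-Z}$ a supermartingale and pins the constant at $1/\phi$; I would also have to dispose of the tie cases (rate $1/2$, which only strengthen the supermartingale inequality) and of the trivial regime $a_{21}\le a_{11}$, in which $Z$ never decreases and the bound holds vacuously.
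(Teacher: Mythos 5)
Your proposal is correct and follows essentially the same route as the paper's proof: your variable $Z_t$ is (up to a shift) the paper's interface position $X_t$, your isolated-defector versus block-edge dichotomy (drop by $1$ at rate at most $1$ and jump by at least $2$ at rate $1$, versus pure increase at rate $1$) is exactly the case analysis $K_t = 2$ versus $K_t \geq 3$ in Lemma~\ref{lem:1D-super}, and the identity $\phi^{-2} + \phi - 2 = 0$ plus optional stopping yields the same bound $1 - \phi^{-1}$ per side as in Lemma~\ref{lem:stopping}. The only difference is presentational: you obtain the squared bound by coupling with two frozen half-line processes driven by disjoint Poisson clock families, which makes the independence of the two endpoints explicit, whereas the paper tracks the boundaries $X_t^{\pm}$ of the type-1 connected component of the origin and asserts the factorization of the infimum directly; your treatment of that step is, if anything, the more careful of the two.
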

\noindent By symmetry, we have the same result for strategy~2 in the parameter region obtained from~\eqref{eq:1D-gold-1} by exchanging the role of the two strategies, i.e.,
\begin{equation}
\label{eq:1D-gold-2}
  a_{21} + a_{22} > a_{11} + \max \,(a_{12}, a_{11}) \quad \hbox{and} \quad 2a_{22} > a_{11} + a_{12}.
\end{equation}
 The two parameter regions~\eqref{eq:1D-gold-1}--\eqref{eq:1D-gold-2} covered by Theorem~\ref{th:1D-gold} can be conveniently visualized in the~$a_{11} - a_{22}$ plane as follows.
 Letting
 $$ \begin{array}{rcl}
      D_0 & = & \hbox{straight line with slope~1 going through} \ (a_{21}, a_{12}) \vspace*{2pt} \\
      D_1 & = & \hbox{straight line with slope~$\frac{1}{2}$ going through} \ (a_{12}, a_{12}) \vspace*{2pt} \\
      D_2 & = & \hbox{straight line with slope~2 going through} \ (a_{21}, a_{21}) \end{array} $$
 with the two payoffs~$a_{12}$ and~$a_{21}$ being fixed,
\begin{itemize}
 \item the payoffs~$a_{11}$ and~$a_{22}$ satisfy~\eqref{eq:1D-gold-1} if and only if point~$(a_{11}, a_{22})$ is below all three straight lines, \vspace*{2pt}
 \item the payoffs~$a_{11}$ and~$a_{22}$ satisfy~\eqref{eq:1D-gold-2} if and only if point~$(a_{11}, a_{22})$ is above all three straight lines.
\end{itemize}
 See Figure~\ref{fig:diagram-1D} for a picture of the three straight lines in the~$a_{11} - a_{22}$ plane. \\
\indent Finally, we assume that the process starts from a translation invariant product measure with a positive density of each strategy.
 Since in this case the initial configuration contains infinitely many rows of say three type~1 players, it directly follows from Theorem~\ref{th:1D-gold} and the ergodic theorem that,
 with probability one, at least one of these rows will expand without bound.
 This shows that strategy~1 wins when~\eqref{eq:1D-gold-1} holds and that strategy~2 wins when~\eqref{eq:1D-gold-2} holds.
 By identifying finite patterns with a mixture of 1s and 2s that are stable, i.e., patterns that cannot change under the dynamics, when
\begin{equation}
\label{eq:1D-fixation}
  \begin{array}{l}
    (2a_{11} > a_{21} + a_{22} \ \ \hbox{and} \ \ 2a_{22} > a_{11} + a_{12}) \quad \hbox{or} \vspace*{4pt} \\
    (2a_{11} > a_{21} + a_{22} > a_{11} + a_{12}) \quad \hbox{or} \quad (2a_{22} > a_{11} + a_{12} > a_{21} + a_{22}), \end{array}
\end{equation}
 since these patterns are finite and therefore occur infinitely often in the initial configuration, we can also prove that, when~\eqref{eq:1D-fixation} holds, the system fixates to a
 random configuration with infinitely many type~1 players and infinitely many type~2 players.
 Summarizing, we obtain the following theorem.
\begin{figure}[t]
 \centering
\scalebox{0.40}{\input{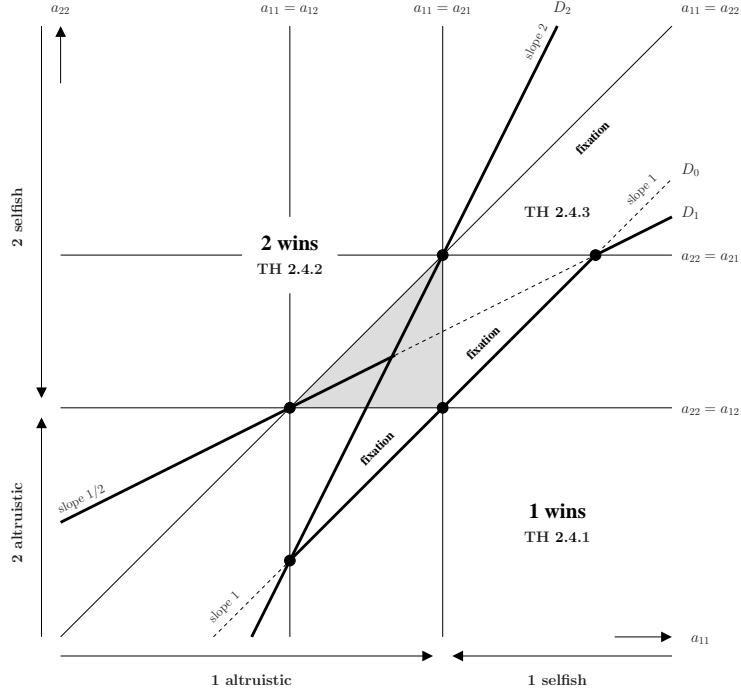}}
 \caption{\upshape Phase diagram of the one-dimensional nearest-neighbor process in the~$a_{11} - a_{22}$ plane under the assumption that~$a_{12} < a_{21}$.
  The parameter regions covered by Theorem~\ref{th:1D} are delimited by the thick solid lines.
  The grey triangle refers to the parameter region of the prisoner's dilemma game.}
\label{fig:diagram-1D}
\end{figure}
\begin{theorem}
\label{th:1D}
 Let~$M = d = 1$ and assume that the process starts from a translation invariant product measure with a positive density of each type. Then,
\begin{enumerate}
 \item strategy~1 wins when~\eqref{eq:1D-gold-1} holds, \vspace*{4pt}
 \item strategy~2 wins when~\eqref{eq:1D-gold-2} holds, \vspace*{4pt}
 \item there is fixation to a translation invariant random configuration in which both strategies are present when~\eqref{eq:1D-fixation} holds.
\end{enumerate}
\end{theorem}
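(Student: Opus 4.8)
The plan is to treat the three parts separately, reducing parts~(1) and~(2) to Theorem~\ref{th:1D-gold} and handling part~(3) by a direct combinatorial analysis of the interfaces. Throughout I would use two elementary features of the nearest-neighbor dynamics. First, a player whose two neighbors share her own type has the opposite $\Phi$ equal to $-\infty$ and so never flips; hence flips occur only at a boundary between a block of~1s and a block of~2s, and the interior of any block of length at least three is automatically frozen. Second, a single flip either translates one such interface (when the flipped site has one neighbor of each type) or annihilates a neighboring pair (when the flipped site is a lone player), so the number of interfaces in any finite window is nonincreasing in time.

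For part~(1), I would first use that a translation invariant product measure with positive density of each type makes the pattern $111$ occur at a positive density of sites, hence (ergodic theorem) at infinitely many locations to the left and right of any fixed~$x$. Each such occurrence is a core in~$\Lambda_1$, so Theorem~\ref{th:1D-gold} gives, uniformly in the outside configuration, probability at least~$(7-3\sqrt5)/2>0$ that this core stays type~1 for all time. The second step is to upgrade \emph{survival of the core} to \emph{unbounded growth of the surrounding block}: for a clean interface the boundary~2 flips to~1 because the first inequality in~\eqref{eq:1D-gold-1} gives $a_{11}+a_{12}>2a_{22}$, so its payoff $(a_{11}+a_{12})/2$ beats the adjacent~2's payoff~$a_{22}$, while the boundary~1 never flips because the second inequality gives $2a_{11}>a_{21}+a_{22}$; tracking the right interface as a random walk with positive drift conditioned to stay right of the core (the same interface analysis behind Theorem~\ref{th:1D-gold}) shows it tends to~$+\infty$, and symmetrically on the left. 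Finally, the uniform lower bound lets me reveal the cores one at a time and apply a conditional Borel--Cantelli argument to conclude that, almost surely, some core to the left of~$x$ survives; its expanding block then contains~$x$ from some finite time on and keeps it equal to~$1$ forever, giving $P(\xi_t(x)=1)\to1$. Part~(2) is identical after exchanging the two strategies and using~\eqref{eq:1D-gold-2}.

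For part~(3) I would look for finite patterns that are frozen under~\eqref{eq:1D-fixation}. The content of the condition is interface stability: at the interface between a~1-block and a~2-block of length at least three, the boundary~1 has $\Phi_1=a_{11}$ and $\Phi_2=(a_{21}+a_{22})/2$ while the boundary~2 has $\Phi_2=a_{22}$ and $\Phi_1=(a_{11}+a_{12})/2$, so both interface sites stay put precisely when $2a_{11}>a_{21}+a_{22}$ and $2a_{22}>a_{11}+a_{12}$, which is the first clause; the remaining two clauses cover the cases where one strategy eliminates the too-short blocks of the other while the interfaces between long blocks still freeze. Such frozen patterns contain both types and, being finite, occur at positive density, hence infinitely often in both directions almost surely. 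They act as barriers the interfaces cannot cross, decoupling~$\Z$ into finite segments; on each finite segment the number of interfaces is nonincreasing and bounded, so the restricted dynamics is a finite continuous-time Markov chain that reaches an absorbing configuration in finite time. This yields fixation, the barriers guarantee both strategies survive, and translation invariance of the limit is inherited from that of the initial law.

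The hard part will be the stability analysis underlying part~(3), and to a lesser extent the growth step in part~(1). The difficulty is that no finite pattern is frozen \emph{regardless} of the outside: an outer end of a block can be nibbled by a transient short block of the opposite type sitting two sites away, and one must verify that under each clause of~\eqref{eq:1D-fixation} such nibbling halts after bounded penetration—once the end has flipped, the new boundary presents only the payoff~$(a_{21}+a_{22})/2$, which the relevant clause keeps below~$a_{11}$—rather than reloading and propagating inward to the protected interface. Proving that this local repair terminates almost surely, so that each site flips only finitely often, is where the nonmonotonicity of the model makes the argument delicate and where the precise inequalities in~\eqref{eq:1D-fixation} must be used case by case.
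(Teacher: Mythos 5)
Your parts~(1) and~(2) follow the paper's own route: the proof of Lemma~\ref{lem:stopping} already bounds below the probability of the event $\{X_t \geq 0 \ \hbox{for all} \ t \ \hbox{and} \ \lim_{t \to \infty} X_t = \infty\}$, i.e., it gives unbounded expansion and not merely survival of the core, and the paper then concludes via the ergodic theorem where you use a core-revealing Borel--Cantelli argument; both work. One caveat: your claim that the boundary~$1$ ``never flips'' is false in general, since \eqref{eq:1D-gold-1} allows $a_{11} < a_{21}$, in which case a lone~$2$ adjacent to the boundary~$1$ makes it flip at positive rate; this possible retreat, compensated by the rate-one jump over the lone~$2$, is exactly why the golden-ratio supermartingale is needed, and you are safe only because you defer to that analysis.

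Part~(3) contains a genuine gap. The step ``the number of interfaces is nonincreasing and bounded, so the restricted dynamics is a finite continuous-time Markov chain that reaches an absorbing configuration in finite time'' is a non sequitur: a finite chain need not possess, let alone reach, an absorbing state reachable from every configuration, and monotonicity of the interface count does not prevent interfaces from translating back and forth forever without annihilating. Indeed, in the adjacent parameter region~\eqref{eq:1D-open} of this very model the interface count is equally nonincreasing, yet the system fluctuates forever. What must be proved --- and what is the bulk of the paper's proof --- is that from every pattern reachable on a finite segment there is a chain of positive-rate flips leading to a stable pattern: the paper does this by sequentially removing maximal blocks (first blocks of length one, then length-two blocks of~$1$s using $a_{11} + a_{12} \leq a_{21} + a_{22}$, showing blocks of~$1$s of length at least three are stable, and finally resolving length-two blocks of~$2$s), after which almost sure absorption is automatic. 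You correctly flag this as ``the hard part,'' but the proposal supplies no mechanism for it beyond the invalid interface-count inference.

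Moreover, your reading of the clauses of~\eqref{eq:1D-fixation} would derail the case analysis. Under the second clause with $2a_{22} \leq a_{11} + a_{12}$ (the part not already covered by the first clause), the interface between two long blocks does \emph{not} freeze: the boundary~$2$ of a $2$-block of length at least three has a type-$1$ neighbor with payoff $(a_{11} + a_{12})/2 \geq a_{22}$, the payoff of its type-$2$ neighbor, so it flips at positive rate and long $2$-blocks erode. The stable structure in that regime is a $2$-block of length exactly two flanked by $1$-blocks of length at least three (each~$2$ of the pair is protected because its $2$-neighbor's payoff $(a_{21} + a_{22})/2$ beats $(a_{11} + a_{12})/2$), and the correct barrier is the pattern $(2,2,1,\dots,1,2,2)$, whose inner~$1$s are protected forever; proving that protection is precisely where the paper needs the structural lemma that the pattern $(1,2,1)$ can never be created --- this is what your ``bounded penetration'' heuristic would have to become. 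Incidentally, your worry that no finite pattern is frozen regardless of the outside is unfounded in the first clause: after the symmetry reduction $a_{21} + a_{22} \geq a_{11} + a_{12}$ (which forces $a_{11} > a_{12}$), a block of three~$2$s is frozen whatever happens outside it, and this is what makes the decoupling of~$\Z$ into independent finite segments clean there; note also that the positive limiting density of each type then comes from sandwich patterns such as three~$1$s between two such frozen triples of~$2$s, not from the barriers alone.
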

\noindent Using again the three straight lines introduced above, the parameter region~\eqref{eq:1D-fixation} in which the process fixates corresponds to
\begin{itemize}
 \item all the parameter pairs~$(a_{11}, a_{22})$ which are below the straight line~$D_2$ and above either~$D_0$ or~$D_1$ when~$a_{12} < a_{21}$,
\end{itemize}
 and we refer to Figure~\ref{fig:diagram-1D} for a picture.
 Note that the figure shows several disagreements between the spatial and the mean-field models.
 For instance, in the prisoner's dilemma triangular region characterized by
 $$ a_{12} < a_{22} < a_{11} < a_{21} $$
 when strategy~1 is cooperation and strategy~2 is defection, defectors always win in the mean-field model whereas there is a subset of this parameter region in which there is fixation
 for the one-dimensional system.
 This subset is the triangle below the median starting at~$(a_{21}, a_{21})$ in the prisoner's dilemma triangle. \\
\indent The parameter region which is not covered by Theorem~\ref{th:1D} is the region below the straight line~$D_1$ but above~$D_2$, i.e.,
 $$ 2a_{11} < a_{21} + a_{22} \quad \hbox{and} \quad 2a_{22} < a_{11} + a_{12}. $$
 Note that~$D_0$ divides this region into two parts.
 To fix the ideas, we look at the sub-region below the straight line~$D_0$ which gives the condition
\begin{equation}
\label{eq:1D-open}
  2a_{11} < a_{21} + a_{22} \quad \hbox{and} \quad a_{11} + a_{12} > a_{22} + \max \,(a_{21}, a_{22}).
\end{equation}
 In this case,  starting with a finite interval of type~1 players while all the other players initially follow strategy~2, the length of the type~1 interval evolves according to a simple
 symmetric random walk except that, when the length reaches two, it cannot further shrink.
 In particular, due to recurrence, the system fluctuates, i.e., all the players change their strategy infinitely often with probability one.
 This implies that none of the two strategies wins and that Theorem~\ref{th:1D-gold} does not hold in the parameter region~\eqref{eq:1D-open}.
 Since in addition type~2 players with two type~1 neighbors change their strategy at rate one, when starting with a large but finite number of type~1 players, the type~1 intervals merge
 together until there is only one type~1 interval, which leads to the same conclusion: the system fluctuates.
 For the system starting with a translation invariant product measure, we conjecture that strategy~1 wins in the parameter region~\eqref{eq:1D-open} because the previous picture
 suggests that the connected component starting from an interval of type~2 players should be swallowed by the two nearby type~1 intervals.
 However, we have not been able to turn this heuristics into a proof because in the pattern
 $$ (2, 2, 2, 1, \underline{1}, 2, 1, 1, 1, 1, 1) $$
 the~1 which is underlined can turn into a~2, and so does the~1 next to it after such an update.
 This shows in particular that the length of each type~2 interval can make arbitrarily big jumps and is therefore difficult to control.


\section{Coupling with bootstrap percolation}
\label{sec:bootstrap}

\noindent This section is devoted to the proof of Theorem~\ref{th:bootstrap}.
 The first part of the theorem relies on a strong monotonicity result for the process rescaled in space that basically states that large hypercubes of type~1 players cannot break when
 \begin{equation}
 \label{eq:bootstrap-weak}
   a_{11} > \max \,(a_{21}, a_{22}).
 \end{equation}
 Under the stronger assumption of the second part
 \begin{equation}
 \label{eq:bootstrap-strong}
  (N - 1) \,a_{11} + \min \,(a_{11}, a_{12}) > N \max \,(a_{21}, a_{22}),
 \end{equation}
 we show that not only the rescaled process is almost surely nondecreasing but also it expands at least like a certain bootstrap percolation model which, together with a
 result in~\cite{schonmann_1992}, is the key to proving the second part of the theorem. \\
\indent Throughout this section, configurations of spin systems are often identified with the corresponding set of vertices in state~1 to simplify some of the expressions.
 In addition, we think of the evolutionary game model as being generated from a Harris' graphical representation~\cite{harris_1972}, i.e., a collection of independent Poisson processes.
 In contrast with the classical death-birth updating process in which players update their strategy by mimicking a neighbor chosen with a probability proportional to the neighbor's payoff,
 the new strategy is now chosen deterministically except when there is a tie.
 In particular, the graphical representation reduces to a collection of rate one Poisson processes, with one process attached to each vertex, together with a collection of independent
 Bernoulli random variables.
 More precisely, for each~$x \in \Z^d$,
\begin{itemize}
 \item we let $(N_t (x) : t \geq 0)$ be a rate one Poisson process, \vspace{3pt}
 \item we denote by $T_n (x)$ its $n$th arrival time: $T_n (x) = \inf \,\{t : N_t (x) = n \}$, \vspace{3pt}
 \item we let~$B_n (x) = \bernoulli (1/2)$ for~$n > 0$ be independent.
\end{itemize}
 We think of~$T_n (x)$ as the~$n$th time at which the player at~$x$ updates her strategy while the Bernoulli random variables are only used in case of a tie.
 Since the Poisson processes attached to different players are independent and the number of players is countable, with probability one, there are no simultaneous updates.
 Assuming that the process has been constructed up to time~$t-$ where~$t = T_n (x)$, the new configuration is determined from the following rule, where we abuse notation somewhat
 to view~$\xi$ as the set~$\{x : \xi (x) = 1 \}$.
\begin{itemize}
 \item In case~$x \in \xi_{t-}$, we set~$\xi_t = \xi_{t-} \setminus \{x \}$ if and only if
       $$ (\Phi_1 (x, \xi_t) < \Phi_2 (x, \xi_t)) \ \hbox{or} \ (\Phi_1 (x, \xi_t) = \Phi_2 (x, \xi_t) \ \hbox{and} \ B_n (x) = 0), $$
 \item while if~$x \notin \xi_{t-}$, we set~$\xi_t = \xi_{t-} \cup \{x \}$ if and only if
       $$ (\Phi_1 (x, \xi_t) > \Phi_2 (x, \xi_t)) \ \hbox{or} \ (\Phi_1 (x, \xi_t) = \Phi_2 (x, \xi_t) \ \hbox{and} \ B_n (x) = 1). $$
\end{itemize}
 The result of~\cite{harris_1972}, which holds for a large class of interacting systems, guarantees that our process can indeed be constructed using these rules starting from any initial
 configuration. \\
\indent We now focus on the proof of the theorem.
 Declare a player to be a 1-center when she and all her neighbors follow strategy~1.
 To keep track of such players, we introduce the process
 $$ \begin{array}{l} \eta_t (x) = \prod_{z \in \bar N_x} \ind \{\xi_t (z) = 1 \} \quad \hbox{where} \quad \bar N_x = N_x \cup \{x \}. \end{array} $$
 With this definition, the player at vertex~$x$ is a 1-center if and only if~$\eta_t (x) = 1$.
 Identifying every configuration~$\eta_t$ with the set of vertices in state~1, i.e., with the set of 1-centers, we have the following strong monotonicity result, which is the key to proving
 the first part of the theorem.
\begin{lemma}
\label{lem:bootstrap-weak}
 Assume~\eqref{eq:bootstrap-weak}.
 Then, $P (\eta_s \subset \eta_t) = 1$ for all~$s \leq t$.
\end{lemma}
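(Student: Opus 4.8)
The plan is to show that the collection of $1$-centers can only grow, by inspecting the effect of each individual update. Since the rate-one Poisson clocks at distinct vertices are independent and the vertex set is countable, with probability one no two updates coincide, so $\eta_t$ changes only at the isolated times $t = T_n (y)$; it therefore suffices to verify that no such update can turn a $1$-center into a non-$1$-center. Because $\eta_t (x)$ depends only on the strategies on $\bar N_x = N_x \cup \{x\}$ and a single update alters the strategy of one vertex $y$ alone, the value $\eta_t (x)$ can drop at $t = T_n (y)$ only if $y \in \bar N_x$, that is, only if $y = x$ or $y \in N_x$. I would analyze these two cases in turn, showing in each that the updating player cannot switch from strategy $1$ to strategy $2$.

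First I would dispose of the case $y = x$. If $x$ is a $1$-center just before the update, every neighbor of $x$ already follows strategy $1$, so $x$ has no type-$2$ neighbor and $\Phi_2 (x, \xi_t) = - \infty$, whereas $\Phi_1 (x, \xi_t)$ is finite since $N_x \neq \emptyset$. By the update rule the player at $x$ is then never relabeled~$2$, so $x$ retains strategy $1$ and stays a $1$-center.

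The substantive case is $y \in N_x$, where $y$ is a type-$1$ neighbor of the $1$-center $x$. The key observation I would exploit is that, by symmetry of the neighborhood relation, $x$ is itself a neighbor of $y$, and that the payoff of the $1$-center is pinned exactly at $\phi (x, \xi_t) = a_{11}$ because $f_1 (x, \xi_t) = 1$. Hence $\Phi_1 (y, \xi_t) \geq a_{11}$. Meanwhile every type-$2$ neighbor $w$ of $y$ has payoff $\phi (w, \xi_t) = a_{21} f_1 (w, \xi_t) + a_{22} f_2 (w, \xi_t)$, a convex combination of $a_{21}$ and $a_{22}$ and thus at most $\max \,(a_{21}, a_{22})$, so $\Phi_2 (y, \xi_t) \leq \max \,(a_{21}, a_{22})$. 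Invoking~\eqref{eq:bootstrap-weak} then yields $\Phi_1 (y, \xi_t) \geq a_{11} > \max \,(a_{21}, a_{22}) \geq \Phi_2 (y, \xi_t)$, so $y$ is not relabeled and $x$ remains a $1$-center.

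Putting the two cases together shows $\eta_t (x)$ is nondecreasing in $t$ at every update, hence for all $x$, which is exactly $P (\eta_s \subset \eta_t) = 1$ for $s \leq t$. The hard part is not the bookkeeping but guaranteeing that the inequality $\Phi_1 (y, \xi_t) > \Phi_2 (y, \xi_t)$ is \emph{strict}: the tie-breaking clause of the dynamics could flip $y$ when $\Phi_1 = \Phi_2$, so the argument would fail under a merely weak hypothesis. This is precisely where the strict assumption $a_{11} > \max \,(a_{21}, a_{22})$ of~\eqref{eq:bootstrap-weak} is essential, combined with the fact that a $1$-center's payoff equals $a_{11}$ exactly.
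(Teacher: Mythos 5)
Your proof is correct and follows essentially the same route as the paper's: the case $y = x$ (no type-$2$ neighbor, so $\Phi_2 = -\infty$) matches the paper's equation~\eqref{eq:bootstrap-weak-1}, and the case $y \in N_x$ (the $1$-center's payoff is exactly $a_{11}$, strictly above the bound $\max(a_{21}, a_{22})$ on any type-$2$ payoff, so $\Phi_1 (y, \xi_t) > \Phi_2 (y, \xi_t)$) matches~\eqref{eq:bootstrap-weak-2}. The only difference is presentational: you phrase the argument update-by-update via the graphical representation, while the paper phrases it in terms of instantaneous flip rates; your emphasis on the strictness of the inequality and on the tie-breaking clause is exactly the point the paper's proof relies on as well.
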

\noindent In words, the result states that a 1-center is stable in the sense that it remains a 1-center forever.
 This is due to the fact that 1-centers get the largest possible payoff and therefore inspire all of their neighbors to stick to strategy~1.
 We now turn our heuristics into a rigorous proof. \\ \\
\begin{proof}
 Let~$x \in \eta_s$ be a 1-center at time~$s$.
 By definition of a 1-center, there is no player following strategy~2 in the neighborhood of vertex~$x$, therefore
\begin{equation}
\label{eq:bootstrap-weak-1}
  \begin{array}{l} \lim_{\ep \to 0} \,(1/\ep) \,P (x \notin \xi_{s + \ep} \,| \,x \in \eta_s) = 0. \end{array}
\end{equation}
 Observe in addition that, since~\eqref{eq:bootstrap-weak} holds, for all~$y \notin \xi_s$,
 $$ \begin{array}{rcl}
    \phi (y, \xi_s) & = & a_{21} \,f_1 (y, \xi_s) + a_{22} \,f_2 (y, \xi_s) \leq \max \,(a_{21}, a_{22}) \vspace*{4pt} \\
                    & < & a_{11} = a_{11} \,f_1 (x, \xi_s) + a_{12} \,f_2 (x, \xi_s) = \phi (x, \xi_s) \end{array} $$
 indicating that the payoff of a~1-center is always greater than the payoff of a type~2 player.
 This implies that, for all~$z \in N_x$ necessarily of type~1,
 $$ \begin{array}{rcl}
    \Phi_1 (z, \xi_s) & = & \sup_{w \in N_z} \phi (w, \xi_s) \,\ind \{w \in \xi_s \} \vspace*{4pt} \\
                   & \geq & \phi (x, \xi_s) > \sup_{w \in N_z} \phi (w, \xi_s) \,\ind \{w \notin \xi_s \} = \Phi_2 (z, \xi_s). \end{array} $$
 In particular, for all~$z \in N_x$,
\begin{equation}
\label{eq:bootstrap-weak-2}
  \begin{array}{l}
  \lim_{\ep \to 0} \,(1/\ep) \,P (z \notin \xi_{s + \ep} \,| \,x \in \eta_s) \vspace*{4pt} \\ \hspace*{10pt} =
  \ind \{\Phi_1 (x, \xi) < \Phi_2 (x, \xi) \} + (1/2) \,\ind \{\Phi_1 (x, \xi) = \Phi_2 (x, \xi) \} = 0. \end{array}
\end{equation}
 It follows from~\eqref{eq:bootstrap-weak-1}--\eqref{eq:bootstrap-weak-2} that, once a player and all of her neighbors are of type~1, which makes her a~1-center, they remain so forever.
\end{proof} \\ \\
 Recall that~$\bar N_x = N_x \cup \{x \}$.
 The first part of Theorem~\ref{th:bootstrap} directly follows from Lemma~\ref{lem:bootstrap-weak}.\\ \\
\begin{demo}{Theorem~\ref{th:bootstrap}.1}
 Assume that~$\rho > 0$ and~\eqref{eq:bootstrap-weak} holds. Then,
 $$ \begin{array}{l} P (x \in \eta_0) = P (\bar N_x \subset \xi_0) = \prod_{y \in \bar N_x} P (y \in \xi_0) = \rho^{N + 1} > 0 \end{array} $$
 where~$N = \card (N_x)$.
 This and Lemma~\ref{lem:bootstrap-weak} imply that
 $$ \begin{array}{l}
      \limsup_{t \to \infty} P (\xi_t (x) = 2) = 1 - \liminf_{t \to \infty} P (\xi_t (x) = 1) \vspace*{4pt} \\ \hspace*{25pt}
                                            \leq 1 - P (x \in \xi_t \ \hbox{for all} \ t) \leq 1 - P (x \in \eta_0) = 1 - \rho^{N + 1} < 1 \end{array} $$
 therefore strategy~2 cannot win.
\end{demo} \\ \\
 To prove the second part of the theorem, we compare the set of 1-centers with one of models of bootstrap percolation studied in~\cite{schonmann_1992}.
 This process is the discrete-time two-state spin system that starts from a random configuration but evolves deterministically as follows:
 occupied vertices remain occupied forever and empty vertices become occupied at the next time step if and only if at least one of their two nearest neighbors in each of the~$d$ directions is
 occupied.
 More formally, identifying the state of the process with the set of occupied sites, the dynamics of this bootstrap percolation model is described by
\begin{equation}
\label{eq:bootstrap}
  \begin{array}{l}
   x \in \zeta_{t + 1} \quad \hbox{if and only if} \vspace*{3pt} \\ \quad
   x \in \zeta_t \quad \hbox{or} \quad \zeta_t \cap \{x - e_i, x + e_i \} \neq \varnothing \quad \hbox{for all} \quad i = 1, 2, \ldots, d, \end{array}
\end{equation}
 for all~$(x, t) \in \Z^d \times \N$ where~$e_i$ is the~$i$th unit vector in~$\Z^d$.
 Since this process evolves in discrete time whereas our process evolves in continuous time, rather than a stochastic domination at every time, we will prove that the infinite time limit of
 bootstrap percolation is a subset of its counterpart for our process.
 To compare the infinite time limits, we first note that
 $$ \begin{array}{l}
      a_{11} \leq \max \,(a_{21}, a_{22}) \quad \hbox{implies that} \vspace*{4pt} \\ \hspace*{25pt}
     (N - 1) \,a_{11} + \min \,(a_{11}, a_{12}) \leq N a_{11} \leq N \max \,(a_{21}, a_{22}) \end{array} $$
 showing that condition~\eqref{eq:bootstrap-strong} implies condition~\eqref{eq:bootstrap-weak}.
 From this and Lemma~\ref{lem:bootstrap-weak}, we deduce that, whenever condition~\eqref{eq:bootstrap-strong} is satisfied, we have
\begin{equation}
\label{eq:boostrap-1}
  x \in \eta_s \quad \hbox{implies that} \quad x \in \eta_t \ \hbox{for all} \ t \geq s.
\end{equation}
 The second key ingredient is the following lemma.
\begin{lemma}
\label{lem:bootstrap-strong}
 Assume~\eqref{eq:bootstrap-strong}. Then, for all~$t \geq s$,
 $$ P (x \notin \eta_t \,| \,\eta_s \cap \{x - e_i, x + e_i \} \neq \varnothing \ \hbox{for all} \ i = 1, 2, \ldots, d) \leq e^{- (t - s)}. $$
\end{lemma}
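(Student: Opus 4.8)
The plan is to exploit the two facts packaged into~\eqref{eq:bootstrap-strong}: that it implies the monotonicity~\eqref{eq:boostrap-1} already established, and that the extra term~$\min(a_{11}, a_{12})$ forces any type~1 player who has all but at most one of her neighbors of type~1 to strictly dominate every type~2 player. Precisely, I would first record that if~$x$ is of type~1 and at least~$N - 1$ of its~$N$ neighbors are of type~1, then
$$ \phi(x, \xi) \geq \frac{(N-1) \,a_{11} + \min(a_{11}, a_{12})}{N} > \max(a_{21}, a_{22}) \geq \phi(w, \xi) $$
for every type~2 player~$w$, the strict middle inequality being exactly~\eqref{eq:bootstrap-strong} after division by~$N$. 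This is the single quantitative input from the stronger hypothesis.

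Next I would analyze the geometry of the conditioning event~$\{\eta_s \cap \{x - e_i, x + e_i\} \neq \varnothing$ for all~$i\}$. For each direction~$i$, fix a 1-center~$x + \sigma_i e_i$ with~$\sigma_i \in \{-1, +1\}$. A short sup-norm computation shows that a neighbor~$z \in N_x$ fails to lie in the cube~$\bar N_{x + \sigma_i e_i}$ if and only if~$z_i = x_i - \sigma_i M$; hence the only neighbor of~$x$ that need not be covered by any of these 1-centers is the single corner~$z^\star = x - \sum_i \sigma_i M \,e_i$. Since every covered site lies in the neighborhood of a permanent 1-center,~\eqref{eq:boostrap-1} guarantees that all of~$\bar N_x \setminus \{z^\star\}$ are of type~1 for every time~$\geq s$. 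If~$z^\star$ is also of type~1 at time~$s$ then~$x \in \eta_s$ and the bound is trivial by monotonicity, so I may assume~$z^\star$ is the unique possibly-type-2 neighbor of~$x$.

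It then remains to show that~$z^\star$ flips to type~1 at its first update after time~$s$. Because~$x$ keeps at least~$N - 1$ type~1 neighbors, namely all of~$\bar N_x \setminus \{x, z^\star\}$, and is itself of type~1 for all times~$\geq s$, the displayed payoff bound gives~$\phi(x, \xi_u) > \max(a_{21}, a_{22})$ for every~$u \geq s$. As~$x \in N_{z^\star}$, here~$\|x - z^\star\|_\infty = M$, and~$x$ is of type~1, this yields~$\Phi_1(z^\star, \xi_u) \geq \phi(x, \xi_u) > \max(a_{21}, a_{22}) \geq \Phi_2(z^\star, \xi_u)$, a strict inequality with no possibility of a tie. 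Consequently, at the first arrival of the Poisson clock at~$z^\star$ after time~$s$, vertex~$z^\star$ becomes of type~1; at that instant all of~$\bar N_x$ is of type~1, so~$x$ becomes a 1-center and, by Lemma~\ref{lem:bootstrap-weak}, remains one thereafter. Hence the event~$\{x \notin \eta_t\}$ is contained in the event that the rate-one clock at~$z^\star$ has no arrival during~$(s, t]$, whose probability is~$e^{-(t - s)}$ by the memorylessness of the Poisson process, giving the claim.

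The step I expect to be most delicate is the geometric reduction: checking carefully that, with one chosen 1-center per direction, exactly one corner of~$\bar N_x$ can escape being covered, and that this corner is always a genuine neighbor of~$x$ sitting at sup-norm distance~$M$, so that the payoff domination of~$x$ is transmitted to it. Everything else is either the monotonicity already proved in~\eqref{eq:boostrap-1} or the standard exponential waiting-time estimate for a single Poisson clock.
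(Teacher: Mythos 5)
Your proof is correct and follows essentially the same route as the paper's: the same payoff estimate $\phi(x,\xi) \geq ((N-1)a_{11} + \min(a_{11},a_{12}))/N > \max(a_{21},a_{22})$, the same covering argument identifying a single uncovered corner of $\bar N_x$, the same two cases depending on the corner's type at time $s$, and the same exponential clock bound. The only cosmetic difference is that you treat arbitrary sign choices $\sigma_i$ directly, where the paper reduces by symmetry to the event $\{x - e_i \in \eta_s \hbox{ for all } i\}$ and works with the corner $x_+ = x + \sum_i M e_i$.
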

\begin{proof}
 By symmetry of the evolution rules, it suffices to prove the upper bound for the conditional probability given the particular event
\begin{equation}
\label{eq:bootstrap-strong-1}
  A = \{x - e_i \in \eta_s \ \hbox{for all} \ i = 1, 2, \ldots, d \}.
\end{equation}
 Let~$x_+ = x + \sum_i M e_i$.
 Noticing that
 $$ \begin{array}{l} \bar N_x \cap \bar N_{x - e_i} = \{x + \sum_j c_j \,e_j : - M \leq c_j \leq M \ \hbox{and} \ c_i \neq M \} \end{array} $$
 for~$i = 1, 2, \ldots, d$, we deduce that
 $$ \begin{array}{l}
    \bigcup_i \,(\bar N_x \cap \bar N_{x - e_i}) = \bigcup_i \,\{x + \sum_j c_j \,e_j : - M \leq c_j \leq M \ \hbox{and} \ c_i \neq M \} \vspace*{4pt} \\ \hspace*{20pt}
                                                 = \{x + \sum_j c_j \,e_j : - M \leq c_j \leq M \ \hbox{and} \ \sum_j c_j \neq dM \} = \bar N_x \setminus \{x_+ \}. \end{array} $$
 Since~\eqref{eq:bootstrap-strong-1} and Lemma~\ref{lem:bootstrap-weak} imply that~$x - e_i \in \eta_t$ for all~$t \geq s$,
\begin{equation}
\label{eq:bootstrap-strong-2}
  \begin{array}{l} \bar N_x \setminus \{x_+ \} \subset \,\bigcup_i \bar N_{x - e_i} \subset \,\xi_t \end{array}
\end{equation}
 for all times~$t \geq s$,
 indicating that, except maybe for the player at vertex~$x_+$, the player at vertex~$x$ and all the players in the neighborhood of~$x$ follow strategy~1 at time~$s$ and after.
 Now, we distinguish two cases depending on whether the player at vertex~$x_+$ follows strategy~1 or strategy~2 at time~$s$. \vspace*{8pt} \\
{\bf Case 1.} Assume that~$x_+ \in \xi_s$.
 This and~\eqref{eq:bootstrap-strong-2} imply that~$\bar N_x \subset \xi_s$ so the player at~$x$ is a 1-center and it follows from Lemma~\ref{lem:bootstrap-weak} that
\begin{equation}
\label{eq:bootstrap-strong-3}
  x \in \eta_t \quad \hbox{and} \quad P (x \notin \eta_t \,| \,A \ \hbox{and} \ x_+ \in \xi_s) = 0 \quad \hbox{for all} \quad t \geq s.
\end{equation}
{\bf Case 2.} Assume now that~$x_+ \notin \xi_s$ and let
 $$ T = \inf \,\{t > s : t = T_n (x_+) \ \hbox{for some} \ n \in \N^* \} $$
 be the time of the next update at vertex~$x_+$.
 From~\eqref{eq:bootstrap-strong-2}, we get
 $$ \begin{array}{rcl}
    \phi (x, \xi_t) & = & a_{11} \,f_1 (x, \xi_t) + a_{12} \,f_2 (x, \xi_t) \vspace*{4pt} \\
                    & = & (1/N)(\card (N_x \setminus \{x_+ \}) \,a_{11} + a_{12}) \vspace*{4pt} \\
                    & = & (1/N)((N - 1) \,a_{11} + a_{12}) \vspace*{4pt} \\
                 & \geq & (1/N)((N - 1) \,a_{11} + \min \,(a_{11}, a_{12})) \end{array} $$
 for all times~$t \in (s, T)$.
 In particular, under the assumption~\eqref{eq:bootstrap-strong}, for all vertices~$y$ occupied by a type~2 player in the time interval~$t \in (s, T)$,
 $$ \begin{array}{rcl}
    \phi (x, \xi_t) & \geq & (1/N)((N - 1) \,a_{11} + \min \,(a_{11}, a_{12})) \vspace*{4pt} \\
                       & > &  \max \,(a_{21}, a_{22}) \geq \phi (y, \xi_t) \end{array} $$
 indicating that, just before the update time~$T$, the type~1 player at vertex~$x$ has a larger payoff than all type~2 players.
 Recalling the transition rates of the process and that vertex~$x$ is in the neighborhood of vertex~$x_+$, this implies that the state of~$x_+$ switches to~1 at time~$T$.
 In particular, using also the inclusion~\eqref{eq:bootstrap-strong-2} and Lemma~\ref{lem:bootstrap-weak}, we obtain the implications
 $$ \begin{array}{rcl}
       x_+ \in \xi_T & \hbox{implies that} & \bar N_x \subset \,\xi_T \vspace*{3pt} \\
                     & \hbox{implies that} & x \in \eta_T \vspace*{3pt} \\
                     & \hbox{implies that} & x \in \eta_t \ \ \hbox{for all} \ \ t \geq T. \end{array} $$
 Observing finally that~$T - s = \exponential (1)$, we get
\begin{equation}
\label{eq:bootstrap-strong-4}
 \begin{array}{l}
   P (x \notin \eta_t \,| \,A \ \hbox{and} \ x_+ \notin \xi_s) = P (T > t) \vspace*{4pt} \\ \hspace*{80pt} = P (T - s > t - s) = e^{- (t - s)} \end{array}
\end{equation}
 for all times~$t \geq s$. \vspace*{8pt} \\
 Combining~\eqref{eq:bootstrap-strong-3}--\eqref{eq:bootstrap-strong-4}, we conclude that
 $$ \begin{array}{rcl}
      P (x \notin \eta_t \,| \,A) & \leq & P (x \notin \eta_t \,| \,A \ \hbox{and} \ x_+ \in \xi_s) \vee P (x \notin \eta_t \,| \,A \ \hbox{and} \ x_+ \notin \xi_s) \vspace*{4pt} \\
                                       & = & \max \,(0, e^{- (t - s)}) = e^{- (t - s)} \end{array} $$
 which completes the proof.
\end{proof} \\ \\
 The second part of Theorem~\ref{th:bootstrap} follows from~\eqref{eq:boostrap-1} and Lemma~\ref{lem:bootstrap-strong} together with a result of~\cite{schonmann_1992} about the bootstrap
 percolation model. \\ \\
\begin{demo}{Theorem~\ref{th:bootstrap}.2}
 Summarizing~\eqref{eq:bootstrap-1} and the proof of Lemma~\ref{lem:bootstrap-strong}, the transition rates describing the evolution of the process~$(\eta_t)$, i.e., the dynamics of
 the~1-centers, are given by
 $$ \begin{array}{rcl}
      c_{0 \to 1} (x, \eta_t) & = & \ind \{\eta_t \cap \{x - e_i, x + e_i \} \neq \varnothing \ \hbox{for} \ i = 1, 2, \ldots, d \} \vspace*{4pt} \\
      c_{1 \to 0} (x, \eta_t) & = & 0. \end{array} $$
 Comparing with the evolution rules in~\eqref{eq:bootstrap}, this implies that
\begin{equation}
\label{eq:time-limit-1}
  \begin{array}{l} \lim_{t \to \infty} \zeta_t \subset \,\lim_{t \to \infty} \eta_t \quad \hbox{when} \quad \zeta_0 \subset \eta_0. \end{array}
\end{equation}
 Note that the infinite time limit indeed exists for both processes due to monotonicity, i.e., the set of vertices in state~1 cannot decrease.
 To also compare the initial configurations of the two processes, assume that, initially, vertices are independently in state~1 with probability
 $$ P (\xi_0 (x) = 1) = \rho \quad \hbox{and} \quad P (\zeta_0 (x) = 1) = \rho^{N + 1}. $$
 Then, for all~$F \subset \Z^d$ finite,
\begin{equation}
\label{eq:time-limit-2}
  \begin{array}{rcl} P (F \subset \eta_0) & = & P (F + \bar N_0 \subset \xi_0) \vspace*{4pt} \\
                                          & = & \rho^{\,\card (F + \bar N_0)} \geq (\rho^{N + 1})^{\card (F)} = P (F \subset \zeta_0) \end{array}
\end{equation}
 where we use the fact that
 $$ \begin{array}{rcl}
    \card (F + \bar N_0) & = & \card \{x + y : x \in F \ \hbox{and} \ y \in \bar N_0 \} \vspace*{4pt} \\
                      & \leq & \card (F) \,\card (\bar N_0). \end{array} $$
 Hence, the initial configuration of~1-centers dominates stochastically the initial configuration of occupied sites.
 Also, by~\cite[Theorem~3.1]{schonmann_1992}, when starting from a product measure with a positive density of occupied sites,
\begin{equation}
\label{eq:time-limit-3}
  \begin{array}{l} \lim_{t \to \infty} \zeta_t = \lim_{t \to \infty} \{x \in \Z^d : \zeta_t (x) = 1 \} = \Z^d. \end{array}
\end{equation}
 Combining~\eqref{eq:time-limit-1}--\eqref{eq:time-limit-3}, we conclude that, when~$\rho > 0$ and~\eqref{eq:bootstrap-strong} holds,
 $$ \begin{array}{l} \lim_{t \to \infty} \xi_t \supset \lim_{t \to \infty} \eta_t \supset \lim_{t \to \infty} \zeta_t = \Z^d \end{array} $$
 showing that strategy~1 wins.
\end{demo}

\section{Coexistence of altruistic strategies}
\label{sec:coexistence}

\noindent This section is devoted to the proof of Theorem~\ref{th:long-range}.
 In particular, it is assumed throughout this section that the spatial dimension~$d = 1$ and that
 $$ a_- = a_{11} = a_{22} \quad \hbox{and} \quad a_+ = a_{12} = a_{21} \quad \hbox{with} \quad a_- < a_+. $$
 The underlying ingredient is a block construction, which consists in coupling the process properly rescaled in space and time with oriented site percolation.
 To set the space scale, for all~$z \in \Z$, we consider the spatial regions
 $$ A_z = 7Mz + (- 3M/2, 3M/2)\quad \hbox{and} \quad B_z = 7Mz + (- 7M/2, 7M/2). $$
 Then, the main objective is to prove that there is a time~$T$ such that the following events occur with probability close to one when~$M$ is large.
\begin{enumerate}
 \item Starting with at least~$\log (M)$ players with strategy~1 in the interval~$B_0$, at least one of these players does not change her strategy by time~$T$. \vspace*{4pt}
 \item As long as the interval~$A_1$ does not contain any type~1 player, the rightmost such player to the left of this interval spreads strategy~1 to her right to bring a type~1 player in~$A_1$
       by time~$T/2$. \vspace*{4pt}
 \item Once there is a type~1 player in~$A_1$, strategy~1 spreads fast until there are at least~$\sqrt M$ players following strategy~1 in the interval~$B_1$. \vspace*{4pt}
 \item At least~$\log (M)$ of these~$\sqrt M$ players still follow strategy~1 at time~$T$ thus producing the same initial condition as in~1 but translated in space and time by the vector~$(7M, T)$.
\end{enumerate}
 In addition, the previous events, which are depicted in Figure~\ref{fig:block}, also occur with probability close to one after exchanging the role of both strategies.
 To turn our sketch into a proof, for every finite subset~$B \subset \Z$ and~$i = 1, 2$, we let
 $$ \Sigma_t^i (B) = \card \{x \in B : \xi_t (x) = i \} \quad \hbox{and} \quad \Sigma_t (B) = \Sigma_t^1 (B) \wedge \Sigma_t^2 (B) $$
 be the number of type~$i$ players in~$B$ and the number of players in~$B$ following the minority strategy, respectively.
 Also, let~$T (B)$ be the first time there is at least one type~1 and one type~2 player in~$B$, and consider the four good events
 $$ \begin{array}{rcl}
      D_1 & = & \Sigma_0 (B_0) > \log (M) \vspace*{4pt} \\
      D_2 & = & \Sigma_t (B_0) \neq 0 \ \hbox{for all} \ t \in (0, T) \vspace*{4pt} \\
      D_3 & = &  T (A_1) \leq T/2 \vspace*{4pt} \\
      D_4 & = & \Sigma_t (B_1) > \sqrt M \ \hbox{for some} \ t \in (0, T). \end{array} $$
 Note that these events correspond the the events described verbally in~1--4 above and are arranged in chronological order.
 The fact that each of these events occurs with probability close to one is proved in the following four lemmas again following their chronological order.
\begin{figure}[t]
 \centering
\scalebox{0.50}{\input{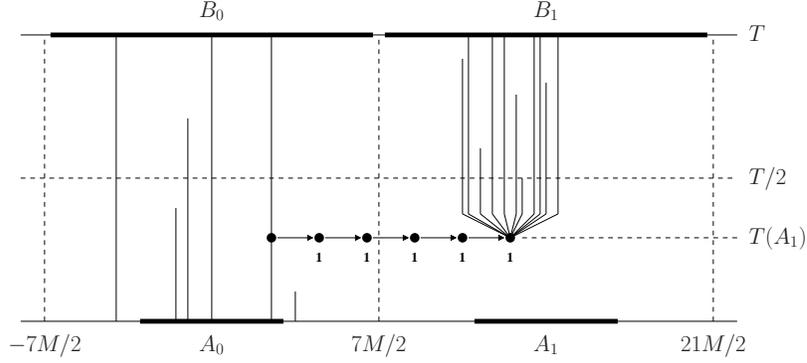}}
 \caption{\upshape Schematic picture of the four events in~1--4.}
\label{fig:block}
\end{figure}
\begin{lemma}
\label{lem:keep-one}
 For all time~$T > 0$,
 $$ P (\Sigma_t^1 (B_0) = 0 \ \hbox{for some} \ t \in (0, T) \,| \,D_1) \leq (1 - e^{-T})^{\log (M)}. $$
\end{lemma}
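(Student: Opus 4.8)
The plan is to reduce the event $\{\Sigma_t^1(B_0) = 0 \ \hbox{for some} \ t \in (0,T)\}$ to a statement about the Poisson clocks of finitely many vertices, exploiting the graphical representation of Section~\ref{sec:bootstrap}. The crucial observation is that a player changes her strategy only at the arrival times $T_n(x)$ of her own Poisson process $N_t(x)$; in particular, if a vertex $x$ has no arrival in the time interval $(0,T)$ then $\xi_t(x) = \xi_0(x)$ for all $t \in (0,T)$. Consequently, if any of the type~1 players present in $B_0$ at time~$0$ fails to ring in $(0,T)$, that player remains type~1 throughout $(0,T)$ and so $\Sigma_t^1(B_0) \geq 1$ for all $t \in (0,T)$.

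First I would make this inclusion precise. Writing $S = \{x \in B_0 : \xi_0(x) = 1\}$ for the (random) set of initial type~1 players in $B_0$, the preceding remark shows that
$$ \{\Sigma_t^1(B_0) = 0 \ \hbox{for some} \ t \in (0,T) \} \subset \bigcap_{x \in S} \{N_T(x) \geq 1 \}, $$
since before the count can vanish each $x \in S$ must update at least once in order to switch to type~2. On the event $D_1$ we have $\Sigma_0^1(B_0) \geq \Sigma_0(B_0) > \log(M)$, hence $\card (S) > \log(M)$.

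Next I would invoke independence. In the graphical representation the initial configuration $\xi_0$, the Poisson processes $(N_t(x))_x$, and the Bernoulli variables are mutually independent; since $D_1$ and $S$ are measurable with respect to $\xi_0$ alone while each event $\{N_T(x) \geq 1\}$ depends only on the clock at $x$, conditioning on $\xi_0$ leaves these clocks independent with $P(N_T(x) \geq 1) = 1 - e^{-T}$. Conditioning on any fixed configuration $\omega \in D_1$ therefore gives
$$ P \big(\Sigma_t^1(B_0) = 0 \ \hbox{for some} \ t \in (0,T) \,\big| \,\xi_0 = \omega \big) \leq \prod_{x \in S(\omega)} P (N_T(x) \geq 1) = (1 - e^{-T})^{\card (S(\omega))} \leq (1 - e^{-T})^{\log(M)}, $$
where the last inequality uses $\card (S(\omega)) > \log(M)$ together with $1 - e^{-T} < 1$. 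Averaging this uniform bound over $\omega \in D_1$ yields the claim.

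The argument is short, and the only point requiring care is the direction of the inclusion: because we want an upper bound it suffices to use the \emph{necessary} condition that every initial type~1 player must ring at least once, and we need not analyze whether a ring actually produces a flip to type~2, which would depend on the payoffs and the neighboring configuration and is irrelevant here. The main obstacle, such as it is, is simply to justify cleanly that no player changes state without an arrival of its own clock and that $D_1$ is independent of the clocks, both of which follow directly from the construction in Section~\ref{sec:bootstrap}.
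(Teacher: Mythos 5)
Your proposal is correct and follows essentially the same route as the paper: both reduce the event to the necessary condition that every initial type~1 player in~$B_0$ must experience at least one Poisson arrival before time~$T$, and then use independence of the clocks (and their independence from~$\xi_0$) to obtain the product bound~$(1 - e^{-T})^{\log (M)}$. Your treatment of the conditioning on~$\xi_0$ is in fact slightly more careful than the paper's, which fixes a subset~$A$ of~$\log (M)$ type~1 players and drops the conditioning implicitly, but the underlying argument is identical.
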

\begin{proof}
 On the event~$D_1$, there exists~$A \subset B_0$ such that
 $$ \card (A) = \log (M) \quad \hbox{and} \quad \xi_0 (x) = 1 \ \hbox{for all} \ x \in A. $$
 Then, the conditional probability to be estimated is bounded by the probability that all the players in~$A$ update their strategy by time~$T$.
 Since in addition players update their strategy independently at rate one, we deduce that
 $$ \begin{array}{l}
      P (\Sigma_t^1 (B_0) = 0 \ \hbox{for some} \ t \in (0, T) \,| \,D_1) \vspace*{4pt} \\
        \hspace*{15pt} \leq \ P (\Sigma_t^1 (A) = 0 \ \hbox{for some} \ t \in (0, T)) \leq P (T_1 (x) < T \ \hbox{for all} \ x \in A) \vspace*{4pt} \\
        \hspace*{15pt}   =  \ \prod_{x \in A} P (T_1 (x) < T) = (1 - e^{-T})^{\log (M)}. \end{array} $$
 This completes the proof.
\end{proof} \\ \\
 For every subset~$B \subset \Z$ and~$i = 1, 2$, define
 $$ T^i (B) = \inf \,\{t : \xi_t (x) = i \ \hbox{for some} \ x \in B \}. $$
\begin{lemma}
\label{lem:move}
 For all~$\ep > 0$, there exists~$T > 0$ such that
 $$ P (T^1 (A_1) > T/2 \,| \,D_1 \cap D_2) \leq \ep/8 \quad \hbox{for all~$M$}. $$
\end{lemma}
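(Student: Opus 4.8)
The plan is to follow the position of the rightmost type~1 player lying to the left of~$A_1$ and to show that, conditionally on~$D_1 \cap D_2$, it is pushed into~$A_1$ before time~$T/2$ through a bounded number of rightward jumps of size~$M$. First I would extract the two consequences of the conditioning and of the symmetric altruistic payoffs that drive everything. On~$D_2$ there is a type~1 player in~$B_0$ at every time~$t \in (0, T)$, so as long as~$A_1$ contains no type~1 player the position
$$ R_t = \max \,\{x < \inf A_1 : \xi_t (x) = 1 \} $$
is well defined and obeys~$-7M/2 < R_t < \inf A_1 = 11M/2$; the conditioning thus pins~$R_t$ above~$-7M/2$, while the first time~$R_t$ produces a type~1 player in~$A_1$ is exactly~$T^1 (A_1)$. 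Since the gap to be crossed is at most~$9M$, only a number of jumps of order one, independent of~$M$, should be required.

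The engine is the following robust fact, which uses only that here~$\phi (z, \xi) = a_- + (a_+ - a_-) \, g(z, \xi)$, with~$g(z, \xi)$ the fraction of opposite-type neighbors of~$z$ and~$a_+ > a_-$. Assume~$A_1$ has no type~1 player and set~$y = R_t$, so that every site of~$(y, \sup A_1)$ is of type~2. Then~$y$ is a type~1 neighbor of~$y + M$, so
$$ \Phi_1 (y + M, \xi_t) \ge \phi (y, \xi_t) = a_- + (a_+ - a_-) \, f_2 (y, \xi_t) \ge a_- + (a_+ - a_-)/2, $$
the last bound because the~$M$ right neighbors of~$y$ are of type~2. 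Conversely, every type~2 neighbor~$q$ of~$y + M$ lies to the right of~$y$ and hence has at most~$M$ type~1 neighbors, all of them~$\le y$, so that~$\phi (q, \xi_t) \le a_- + (a_+ - a_-)/2$ and
$$ \Phi_2 (y + M, \xi_t) \le a_- + (a_+ - a_-)/2 \le \Phi_1 (y + M, \xi_t). $$
The player at~$y + M$ therefore adopts strategy~1 at rate at least~$1/2$, \emph{regardless of the configuration to the left of~$y$}, and each such event advances~$R_t$ by at least~$M$.

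Given this advance mechanism I would bound the reaching time. Each success at the current site~$y + M$ moves the frontier forward by~$M$ at rate at least~$1/2$, and once~$R_t$ exceeds~$9M/2$ the next advance necessarily deposits a type~1 player inside~$A_1 = (11M/2, 17M/2)$; starting from~$R_0 > -7M/2$ this calls for at most nine net advances. I would encode this as a chain of favorable events, one per advance, each of bounded duration and of conditional probability bounded below uniformly in~$M$ (the updates being governed by the rate one Poisson clocks of the graphical representation), and then fix~$T$ large enough, independent of~$M$, that all of them take place within~$(0, T/2)$ with conditional probability at least~$1 - \ep/8$.

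The hard part is that~$R_t$ is not monotone. Since a player imitates its fittest neighbor rather than responding to its own payoff, an isolated high-payoff type~2 player sitting just to the left of~$y$ can be fitter than every type~1 neighbor of~$y$ and so force~$y$ itself to switch to strategy~2, making~$R_t$ decrease. What rescues the argument is a separation of scales: advances are jumps of size~$M$ at rate at least~$1/2$, whereas a retreat can erode the frontier only one site at a time at bounded rate, so the net drift is of order~$M$ and the crossing time stays of order one as~$M \to \infty$. Turning this into a proof — controlling, uniformly in~$M$, the number and total size of retreats within each time window, and coping with the fact that~$D_2$ is measurable with respect to the evolution up to time~$T$ — is the delicate step, and the natural tool is a large-deviation estimate bounding the number of updates, hence the possible decrease of the number of type~1 players in any fixed interval, over a short time. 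I expect this retreat control, not the advance mechanism, to consume essentially all of the effort.
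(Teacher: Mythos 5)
Your advance mechanism is exactly the one the paper uses: since in this symmetric regime every player's payoff equals $a_- + (a_+-a_-)\times(\hbox{fraction of opposite-type neighbors})$, the frontier player $R_t$ has payoff at least $a_- + (a_+-a_-)/2$ while every type~2 player between $R_t$ and $\sup A_1$ has payoff at most that value, so the player at $R_t + M$ adopts strategy~1 at rate at least $1/2$, uniformly in the configuration to the left of $R_t$. Up to that point you match the paper's \eqref{eq:move-1}, and your count of nine advances of size $M$ needed to cross from $-7M/2$ into $A_1$ is also the paper's.

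The genuine gap is in your treatment of the non-monotonicity, which you correctly flag as the hard part but then propose to resolve by a claim that is false. A retreat does \emph{not} erode the frontier ``only one site at a time'': the configuration to the left of $R_t$ is arbitrary (indeed the whole point of the lemma is uniformity over it), so when the frontier player flips to type~2 --- which can happen at rate up to one --- the new rightmost type~1 player to the left of $A_1$ can sit anywhere above $-7M/2$, i.e., a \emph{single} update can push the frontier back by nearly $9M$. Your separation-of-scales/drift heuristic then collapses: in the worst case, advances contribute order $+M/2$ per unit time while retreats contribute order $-9M$ per unit time, so the net drift can be strongly negative, and the tool you propose (large deviations on the number of updates, hence on the decrease of the number of type~1 players in a fixed interval) cannot help, because what must be controlled is the frontier's position, not a particle count, and one flip moves that position by order $M$. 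The paper avoids retreat control altogether: on $D_2$ the frontier never leaves $(-7M/2,\inf A_1)$, so it suffices to wait for nine \emph{consecutive} rightward jumps of size $M$ with no intervening leftward jump. Since rightward jumps occur at rate at least $1/2$ and leftward jumps at rate at most $1$, each jump event is rightward with probability at least $1/3$, and the waiting time $\tau_9$ for nine heads in a row along a rate-$3/2$ Poisson stream is almost surely finite with a law independent of $M$; choosing $T$ with $P(\tau_9 > T/2) \leq \ep/8$ finishes the proof uniformly in $M$. Replacing your drift argument by this run-of-successes argument is not a cosmetic change --- it is the step that makes the lemma true as stated.
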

\begin{proof}
 Let~$X_t$ be the rightmost type~1 player to the left of~$7M - M/2$, i.e.,
 $$ X_t = \sup \,\{x < 7M - M/2 : \xi_t (x) = 1 \} $$
 and observe that, before time~$T^1 (A_1)$, the first~$3M$ sites to the right of~$X_t$ are occupied by type~2 players.
 It follows that, before the stopping time,
 $$ \begin{array}{rcl}
      2M - \Sigma_t (N_{X_t}) & \geq & M \geq \Sigma_t (N_{X_t + 1}) \vspace*{4pt} \\
                              & \geq & \Sigma_t (N_{X_t + 2}) \geq \cdots \geq \Sigma_t (N_{X_t + 2M}). \end{array} $$
 Using~$a_- < a_+$, this implies that
 $$ z \mapsto \phi_t (X_t + z, \xi_t) \ \hbox{is nonincreasing on} \ \{0, 1, \ldots, 2M \}, $$
 therefore, even in the worst case scenario of a tie,
\begin{equation}
\label{eq:move-1}
  \begin{array}{l} \lim_{s \to 0} \,s^{-1} \,P (X_{t + s} - X_t = M \,| \,\xi_t) \geq 1/2 \quad \hbox{almost surely}. \end{array}
\end{equation}
 Since jumps to the right are also bounded by the range~$M$, the process~$(X_t)$ cannot jump over~$A_1$ from left to right.
 In addition, in view of the size of the spatial regions, on the event that there is at least one type~1 player in~$B_0$ at any time before~$T$, the stopping time~$T^1 (A_1)$ is stochastically
 smaller than the time~$\tau_9$ it takes to see a sequence of nine jumps of length~$M$ to the right without jumps to the left.
 Note also that, in view of~\eqref{eq:move-1}, jumps of length~$M$ to the right occur at rate at least one half whereas, regardless of the payoffs, jumps to the left occur independently at rate one.
 This implies that time~$\tau_9$ is equal in distribution to the time until we see nine heads in a row while flipping a coin that comes up heads with probability one third at the
 times of a Poisson process with rate three halves.
 Clearly, time~$\tau_9$ is almost surely finite and its density function does not depend on the range of the interactions so there exists~$T$ large such that
 $$ P (T^1 (A_1) > T/2 \,| \,D_1 \cap D_2) \leq P (\tau_9 > T/2) \leq \ep/8 $$
 for all~$M$.
 This completes the proof.
\end{proof}
\begin{lemma}
\label{lem:spread}
 For all~$M$ large,
 $$ P (\Sigma_t^1 (B_1) \leq \sqrt M \ \hbox{for all} \ t \in (0, T) \,| \,D_1 \cap D_2 \cap D_3) \leq 1 / \sqrt M. $$
\end{lemma}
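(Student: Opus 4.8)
The plan is to follow a single cluster of type~1 players seeded inside~$A_1$ and to exploit the fact that, because $a_- < a_+$, a \emph{sparse} set of type~1 players is uniformly fitter than its type~2 surroundings; this forces the cluster to expand almost deterministically up to size~$\sqrt M$. On the event~$D_3$ we have $T^1 (A_1) \leq T (A_1) \leq T/2$, so by time~$T/2$ there is a type~1 player inside $A_1 \subset B_1$. Fix this seed and let $Y_t$ be the number of type~1 players in the cluster it generates. Since $A_1$ lies at distance more than~$M$ from the type~1 bulk sustained in~$B_0$ by $D_1 \cap D_2$, and at distance at least~$2M$ from the boundary of~$B_1$, the cluster is initially isolated from every other type~1 region. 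It suffices to show that, conditionally on $D_1 \cap D_2 \cap D_3$, the quantity $Y_t$ exceeds~$\sqrt M$ at some time before~$T$ with probability at least $1 - 1/\sqrt M$.

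The engine of the proof is a payoff-domination estimate valid while $Y_t \leq \sqrt M$. A type~1 player of an isolated cluster of size at most~$\sqrt M$ has at most~$\sqrt M$ type~1 neighbors, so $f_1 \leq \sqrt M / (2M)$ at its site and, by the definition of~$\phi$, its payoff is at least $a_+ - (a_+ - a_-)/(2 \sqrt M)$; symmetrically, any type~2 player within range of the cluster has payoff at most $a_- + (a_+ - a_-)/(2 \sqrt M)$. Because $a_- < a_+$, for~$M$ large these two ranges are disjoint, and \emph{every} type~1 player of the cluster is strictly fitter than \emph{every} type~2 player within range of it. Two consequences follow at once from the transition rates~\eqref{eq:ips-rates}: each type~2 site within range of the cluster satisfies $\Phi_1 > \Phi_2$ and hence switches to type~1 at rate one with no possibility of a tie, so in particular the~$M$ type~2 sites flanking the cluster on the side away from~$B_0$ all flip at rate one; and no type~1 player that still has a type~1 neighbor can ever revert, so the only sites that may leave the cluster are the isolated ones.

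These two facts turn the growth of $Y_t$ into a strongly supercritical process. The death rate is bounded by the current number of isolated type~1 players, which is at most one until the seed has been copied and is zero thereafter, while the birth rate is at least~$M$. Moreover, since every type~2 site within range of a type~1 site flips, the cluster fills in with essentially no gaps, so $Y_t$ type~1 players occupy an interval of length $O(\sqrt M)$; in the time $O(1/\sqrt M)$ needed to reach~$\sqrt M$ players the cluster advances only $O(\sqrt M) \ll M$ and stays inside~$B_1$. A gambler's-ruin comparison with a birth--death chain of birth rate~$M$ and death rate at most one then shows that $Y_t$ reaches~$\sqrt M$ before dying out, and well before time~$T$, the sole genuine source of failure being the seed reverting before one of its neighbors copies it, an event of probability $O(1/M) \leq 1/\sqrt M$. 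Since having more than~$\sqrt M$ type~1 players in~$B_1$ is precisely the complement of the event in the statement, this yields the claimed bound.

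The main obstacle is to make the ``sparse and isolated'' hypothesis legitimate for the whole growth phase: one must rule out that the cluster, as it grows, absorbs enough type~1 players --- from the bulk left in~$B_0$ or from the thin trail deposited by the invading front of Lemma~\ref{lem:move} --- to raise some local frequency $f_1$ to order one and thereby destroy the strict payoff domination. This is where the geometry of $A_0, B_0, A_1, B_1$ enters (the bulk stays out of range because $A_1$ is more than~$M$ away from~$B_0$, and the cluster grows away from it), and it must be combined with a large-deviation control, in the spirit of Lemmas~\ref{lem:keep-one} and~\ref{lem:move}, bounding how fast type~1 players can accumulate, so that both the separation from the bulk and the containment in~$B_1$ are maintained throughout.
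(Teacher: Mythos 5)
Your engine --- the payoff-domination estimate exploiting $a_- < a_+$, under which every update within range of the seed produces a type~1 player and every type~1 player retaining a type~1 neighbor is frozen --- is exactly the paper's, and your seed-race bound (the seed can only be lost if its own clock rings before all $2M$ neighboring clocks, probability $1/(2M+1)$) is sound, indeed sharper than the paper's $\sqrt M/(2M+1)$. But two steps have genuine gaps. First, you state the domination under the hypothesis that the \emph{cluster} descended from the seed has size at most $\sqrt M$ and is isolated, whereas the payoff bounds require $f_1 \leq \sqrt M/(2M)$ at every site within range, i.e., control of \emph{all} type~1 players in the relevant neighborhoods, not just cluster members; note that the seed is in fact never isolated, since at time $T(A_1)$ it has just copied a type~1 neighbor within distance $M$, behind which lies the uncontrolled trail of the front from Lemma~\ref{lem:move}. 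You name this obstacle yourself in your last paragraph but leave it unresolved, and the unspecified ``large-deviation control'' you appeal to is precisely the missing step. The paper's fix is structural rather than estimative: it sets $\tau_- = \inf \,\{t > T(A_1) : \Sigma_t^1(B_1) > \sqrt M \}$ and runs the domination argument only for $t < \min(\tau_{x_0}, \tau_-)$, so that sparsity is measured by the \emph{total} count of type~1 players in $B_1$; then the sparsity hypothesis can only fail once $\Sigma_t^1(B_1) > \sqrt M$, which is the very event to be proved, and no control of the front or of the configuration outside $B_1$ is ever needed.

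Second, the claim that ``the cluster fills in with essentially no gaps, so $Y_t$ type~1 players occupy an interval of length $O(\sqrt M)$'' is false, and with it your containment-in-$B_1$ step. Under the domination, \emph{every} type~2 site within distance $M$ of the cluster flips at rate one, so already the first convert lands at a uniform position of typical distance of order $M$ from the seed, and the first $\sqrt M$ births occur on a time scale $O(1/\sqrt M)$ during which any individual clock rings with probability only $O(1/\sqrt M)$: the converts form a sparse, scattered subset of an in-range region whose diameter grows with each extreme birth and typically far exceeds the length $7M$ of $B_1$, so nothing guarantees that $\sqrt M$ of them lie in $B_1$. The repair is again the paper's: count only conversions inside the fixed window $N_{x_0} \subset B_1$ (recall $x_0 \in A_1$, so $N_{x_0} \subset 7M + (-5M/2, 5M/2)$). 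All $2M$ sites of this window flip at rate one while the domination holds; defining $\tau_+$ as the first time $\sqrt M$ of these clocks have rung, one gets $\tau_+ \leq T(A_1) + 1$ except with probability at most $\sqrt M \,e^{-\sqrt M}$, and together with the exchangeability bound $P(\tau_{x_0} < \tau_+) \leq \sqrt M/(2M+1) \leq 1/(2\sqrt M)$ this yields the stated $1/\sqrt M$.
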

\begin{proof}
 To avoid trivialities, we assume that there are less than~$\sqrt M$ type~1 players in the interval~$B_1$ at time~$T (A_1)$.
 Then, on the event~$D_3$, there exists a site~$x_0 \in A_1$, fixed from now on, which is occupied by a type~1 player at time~$T (A_1)$, and we introduce the three stopping times
 $$ \begin{array}{rcl}
    \tau_{x_0} & = & \inf \,\{t > T (A_1) : \xi_t (x_0) = 2 \} \vspace*{4pt} \\
    \tau_-     & = & \inf \,\{t > T (A_1) : \Sigma_t^1 (B_1) > \sqrt M \} \vspace*{4pt} \\
    \tau_+     & = & \inf \,\{t > T (A_1) : \card \{x \in N_{x_0} : N_t (x) - N_{T (A_1)} (x) \neq 0 \} \geq \sqrt M \}. \end{array} $$
 Since~$a_- < a_+$, for all
 $$ T (A_1) < t < \min (\tau_{x_0}, \tau_-) \ \ \hbox{and} \ \ x \in x_0 + [- 2M, 2M] \ \ \hbox{with} \ \ \xi_t (x) = 2, $$
 we have the following lower and upper bounds for the payoffs:
 $$ \begin{array}{rcl}
    \phi (x_0, \xi_t) & = & a_{11} f_1 (x_0, \xi_t) + a_{12} f_2 (x_0, \xi_t) \vspace*{4pt} \\
                      & = & a_- f_1 (x_0, \xi_t) + a_+ f_2 (x_0, \xi_t) \vspace*{4pt} \\
                      & \geq & (2M)^{-1} \,(a_- \sqrt M + a_+ \,(2M - \sqrt M)), \ \hbox{and} \vspace*{8pt} \\
      \phi (x, \xi_t) & = & a_{21} f_1 (x, \xi_t) + a_{22} f_2 (x, \xi_t) \vspace*{4pt} \\
                      & = & a_+ f_1 (x, \xi_t) + a_- f_2 (x, \xi_t) \vspace*{4pt} \\
                      & \leq & (2M)^{-1} \,(a_+ \sqrt M + a_- \,(2M - \sqrt M)) < \phi(x_0, \xi_t) \end{array}$$
 for all~$M$ sufficiently large.
 In particular, for all~$z \in N_{x_0}$,
 $$ \begin{array}{rcl}
    \Phi_1 (z, \xi_t) & \geq & \phi (x_0, \xi_t) \vspace*{4pt} \\
                         & > & \sup_{x \in x_0 + [-2M, 2M]} \,\phi (x, \xi_t) \,\ind \{\xi_t (x) = 2 \} \geq \Phi_2 (z, \xi_t) \end{array} $$
 which implies that, at least between times~$T (A_1)$ and~$\min (\tau_{x_0},\tau_-)$, each update in the neighborhood of~$x_0$ results in strategy~1 being selected.
 It follows that
 $$ \{\tau_+ \leq \tau_{x_0} \} \subset \{\tau_- \leq \tau_+ \} \quad \hbox{almost surely} $$
 which, in turn, gives the inclusions of events
 $$ \begin{array}{rcl}
    \{\tau_- > T (A_1) + 1 \} & \subset & \{\tau_+ < \tau_- \} \cup \{\tau_+ > T (A_1) + 1 \} \vspace*{4pt} \\
                              & \subset & \{\tau_{x_0} < \tau_+ \} \cup \{\tau_+ > T (A_1) + 1 \}. \end{array} $$
 Taking the conditional probabilities on both sides, if~$T > 2$ we get
\begin{equation}
\label{eq:spread-1}
  \begin{array}{l}
    P (\Sigma_t^1 (B_1) \leq \sqrt M \ \hbox{for all} \ t \in (0, T) \,| \,D_1 \cap D_2 \cap D_3) \vspace*{4pt} \\
     \hspace*{25pt} \leq \ P (\Sigma_t^1 (B_1) \leq \sqrt M \ \hbox{for all} \ t \in (0, T (A_1) + 1] \,| \,D_1 \cap D_2 \cap D_3) \vspace*{4pt} \\
     \hspace*{25pt}   =  \ P (\tau_- > T (A_1) + 1 \,| \,D_1 \cap D_2 \cap D_3) \vspace*{4pt} \\
     \hspace*{25pt} \leq \ P (\tau_{x_0} < \tau_+ \ \hbox{or} \ \tau_+ > T (A_1) + 1) \vspace*{4pt} \\
     \hspace*{25pt} \leq \ P (\tau_{x_0} < \tau_+) + P (\tau_+ > T (A_1) + 1). \end{array}
\end{equation}
 Since the players in~$x_0 + [-M, M]$ update their strategy at the same rate, the permutations of the times of their first update after~$T (A_1)$ are equally likely, so
\begin{equation}
\label{eq:spread-2}
  \begin{array}{rcl}
    P (\tau_{x_0} < \tau_+) & \leq & P (\inf \,\{t > T (A_1) : N_t (x_0) - N_{t-} (x_0) = 1 \} < \tau_+) \vspace*{4pt} \\
                              &   =  & \sqrt M / (2M + 1) \leq 1 / (2 \sqrt M). \end{array}
\end{equation}
 In addition, letting~$X_i$ be independent exponential random variables with rate~$2M - i$ and using the memoryless property, we also have that
\begin{equation}
\label{eq:spread-3}
  \begin{array}{l}
    P (\tau_+ > T (A_1) + 1) = P (X_0 + X_1 + \cdots + X_{\sqrt M - 1} > 1) \vspace*{4pt} \\ \hspace*{50pt}
                          \leq P (X_i > 1 / \sqrt M \ \hbox{for some} \ i = 0, 1, \ldots, \sqrt M - 1) \vspace*{4pt} \\ \hspace*{50pt}
                          \leq \sqrt M \ P (X_M > 1 / \sqrt M) \vspace*{4pt} \\ \hspace*{50pt}
                             = \sqrt M \,\exp (- \sqrt M) \leq 1 / (2 \sqrt M). \end{array}
\end{equation}
 Combining~\eqref{eq:spread-1}--\eqref{eq:spread-3} gives the desired result.
\end{proof}
\begin{lemma}
\label{lem:keep-more}
 Let~$T > 0$. Then, for all~$M$ large,
 $$ \begin{array}{l}
      P (\Sigma_T^1 (B_1) \leq \log (M) \,| \,D_1 \cap D_2 \cap D_3 \cap D_4) \leq \exp (- (1/8) \,e^{-T} \,\sqrt M). \end{array} $$
\end{lemma}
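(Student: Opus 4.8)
The plan is to run the same survival argument as in Lemma~\ref{lem:keep-one}, but now starting from the roughly $\sqrt M$ type~1 players that the event $D_4$ has produced in $B_1$, and then to control the lower tail of the resulting binomial count by a Chernoff estimate. First I would introduce the stopping time
$$ \tau = \inf \,\{t > 0 : \Sigma_t^1 (B_1) > \sqrt M \}. $$
Since $\Sigma_t (B_1) = \Sigma_t^1 (B_1) \wedge \Sigma_t^2 (B_1) \leq \Sigma_t^1 (B_1)$, the event $D_4$ is contained in $\{\tau < T \}$, so on the conditioning event there are strictly more than $\sqrt M$ type~1 players in $B_1$ at the time $\tau < T$. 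Being integer valued, $\Sigma_\tau^1 (B_1) \geq \lceil \sqrt M \rceil$, and I fix $\lceil \sqrt M \rceil$ of the sites carrying these players, say $x_1, \ldots, x_{\lceil \sqrt M \rceil}$, each satisfying $\xi_\tau (x_j) = 1$.

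The key observation is that a player who is of type~1 at time $\tau$ and experiences no update on $(\tau, T)$ is still of type~1 at time $T$, because a player changes strategy only at one of her own update times $T_n (x)$. By the strong Markov property applied at $\tau$, the rate one Poisson clocks attached to $x_1, \ldots, x_{\lceil \sqrt M \rceil}$ restart afresh and independently after $\tau$, so the events $\{x_j \text{ has no update in } (\tau, T) \}$ are independent, each of probability $e^{- (T - \tau)} \geq e^{-T}$ on $\{\tau < T \}$. Consequently, conditionally on $D_1 \cap D_2 \cap D_3 \cap D_4$, the count $\Sigma_T^1 (B_1)$ stochastically dominates a binomial random variable $Y$ with parameters $\lceil \sqrt M \rceil$ and $e^{-T}$, whose mean satisfies $\mu = \lceil \sqrt M \rceil \, e^{-T} \geq e^{-T} \sqrt M$.

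It then remains to estimate the lower tail of $Y$. The standard Chernoff bound gives $P (Y \leq \mu / 2) \leq \exp (- \mu / 8)$, and for all $M$ large enough we have $\log (M) \leq \mu / 2$, since $e^{-T}$ is a fixed constant while $\sqrt M$ dominates $\log (M)$. Combining these facts yields
$$ P (\Sigma_T^1 (B_1) \leq \log (M) \,| \,D_1 \cap D_2 \cap D_3 \cap D_4) \leq P (Y \leq \mu / 2) \leq \exp (- \tfrac{1}{8} \, e^{-T} \sqrt M), $$
which is the asserted inequality.

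The main obstacle is the stochastic domination step rather than the tail bound, which is routine. One must check carefully that $\tau$ is a genuine stopping time and that conditioning on the earlier good events $D_1 \cap D_2 \cap D_3$ — all measurable with respect to the evolution up to time $\tau$ — does not compromise the independence of the post-$\tau$ clock increments that underlie the binomial lower bound. The strong Markov property is exactly what licenses this, but the bookkeeping of precisely which randomness has been revealed by time $\tau$ (the identities and type~1 status of the $x_j$) versus which is still fresh (their future update clocks) is the delicate point to get right.
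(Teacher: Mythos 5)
Your proposal is correct and takes essentially the same route as the paper: both arguments fix the $\sqrt M$ type~1 players produced by $D_4$ at a random time before $T$, observe that any such player whose rate-one clock does not ring again before time $T$ keeps strategy~1, and finish with a Chernoff bound on the resulting binomial count --- you bound the lower tail of $\binomial (\sqrt M, e^{-T})$ (survivors) while the paper bounds the upper tail of $\binomial (\sqrt M, 1 - e^{-T})$ (players who update), which is the same estimate and produces the identical constant $(1/8) \, e^{-T}$. One parenthetical in your last paragraph is inaccurate --- $D_2$ and $D_3$ are \emph{not} measurable with respect to the evolution up to time $\tau$, since $D_2$ constrains the configuration on all of $(0, T)$ and $\tau$ may occur well before $T$ --- but the paper's own proof glosses over exactly the same conditioning issue, deferring it to the uniformity remark made in the proof of Lemma~\ref{lem:perco}.
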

\begin{proof}
 On the event~$D_4$, there exist~$A \subset B_1$ and~$t \in (0, T)$ such that
 $$ \card (A) = \sqrt M \quad \hbox{and} \quad \xi_t (x) = 1 \ \hbox{for all} \ x \in A. $$
 Since players update their strategy at rate one and the conditional probability of interest is bounded by the probability that (for~$M$ large) at least
 $$ \sqrt M - \log (M) \geq (1 - (1/2) e^{-T}) \,\sqrt M $$
 players in~$A$ update their strategy in less than~$T$ time units,
 $$ \begin{array}{l}
      P (\Sigma_T^1 (B_1) \leq \log (M) \,| \,D_1 \cap D_2 \cap D_3 \cap D_4) \vspace*{4pt} \\
             \hspace*{25pt} \leq \ P (\Sigma_t^1 (A) \leq \log (M) \ \hbox{for some} \ t \in (0, T) \,| \,\xi_0 (x) = 1 \ \hbox{for all} \ x \in A) \vspace*{4pt} \\
             \hspace*{25pt} \leq \ P (\card \{x \in A : T_1 (x) < T \} \geq (1 - (1/2) e^{-T}) \,\sqrt M) \vspace*{4pt} \\
             \hspace*{25pt} \leq \ P (\binomial (\sqrt M, 1 - e^{-T}) \geq (1 - (1/2) e^{-T}) \,\sqrt M). \end{array} $$
 Using also the standard large deviation estimate
 $$ P (\binomial (K, p) \geq K (p + z)) \leq \exp (- Kz^2 / 2 (1 - p)) \ \ \hbox{for} \ \ z \in (0, 1 - p) $$
 with~$(K, p) = (\sqrt M, 1 - e^{-T})$ and~$z = (1/2) e^{-T}$, we deduce that
 $$ \begin{array}{l}
      P (\Sigma_T^1 (B_1) \leq \log (M) \,| \,D_1 \cap D_2 \cap D_3 \cap D_4) \vspace*{4pt} \\
        \hspace*{25pt} \leq \ P (\binomial (\sqrt M, 1 - e^{-T}) \geq \sqrt M \,(1 - e^{-T} + (1/2) e^{-T})) \vspace*{4pt} \\
        \hspace*{25pt} \leq \ \exp (- \sqrt M \,(1/4) e^{-2T} / 2 e^{-T}) = \exp (- (1/8) \,e^{-T} \,\sqrt M) \end{array} $$
 for all~$M$ sufficiently large.
\end{proof} \\ \\
 To deduce coexistence from the previous four lemmas, we now couple the interacting particle system with one-dependent oriented site percolation in two dimensions in which sites are
 open with probability~$1 - \ep$.
 More precisely, we consider the directed graph with vertex set
 $$ \mathcal H = \{(z, n) \in \Z \times \Z_+ : z + n \ \hbox{is even} \} $$
 and in which there is an oriented edge
 $$ (z, n) \to (z', n') \quad \hbox{if and only if} \quad z' = z \pm 1 \ \hbox{and} \ n' = n + 1. $$
 Then, for the particle system, we declare~$(z, n) \in \mathcal H$ to be good if
 $$ \begin{array}{l} \Sigma_{nT} (B_z) = \min_{j = 1, 2} \,\card \{x \in B_z : \xi_{nT} (x) = j \} > \log (M), \end{array} $$
 indicating that there are at least~$\log (M)$ players of each strategy in~$B_z$ at time~$nT$, while oriented site percolation is defined by assuming that
 $$ P ((z_i, n_i) \ \hbox{is open for} \ i = 1, \ldots, m) = (1 - \ep)^m $$
 when~$|z_i - z_j| \vee |n_i - n_j| > 1$ for~$i \neq j$.
 Finally, for all~$n \in \N$, we let
 $$ W_n^{\ep} = \{z \in \Z : (z, n) \ \hbox{is wet} \} \quad \hbox{and} \quad X_n = \{z \in \Z : (z, n) \ \hbox{is good} \} $$
 where a site in~$\mathcal H$ is said to be wet whenever it can be reached from a directed path of open sites starting at level zero.
 The next lemma shows that, for all~$\ep > 0$, one can find a sufficiently large dispersal range such that the set of good sites dominates stochastically the set of wet sites.
 In view of the definition of a good site, this will imply coexistence of both strategies.
\begin{lemma}
\label{lem:perco}
 For all~$\ep > 0$, there is~$T > 0$ such that, for all~$M$ large,
 $$ P (z \in W_n^{\ep}) \leq P (z \in X_n) \quad \hbox{for all} \quad (z, n) \in \mathcal H \quad \hbox{whenever} \quad W_0^{\ep} \subset X_0. $$
\end{lemma}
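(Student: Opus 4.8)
The plan is to realize the four lemmas above as the four successive stages of a single block event and then to deduce the domination from the standard comparison between such a block field and one-dependent oriented percolation. First I would fix~$\ep > 0$ and choose the time scale~$T$ using Lemma~\ref{lem:move}, so that
$$ P (T^1 (A_1) > T/2 \mid D_1 \cap D_2) \leq \ep/8, $$
the key point being that this choice of~$T$ is uniform in~$M$. With~$T$ now frozen, I would then send~$M \to \infty$ and use that the bounds furnished by Lemmas~\ref{lem:keep-one}, \ref{lem:spread} and~\ref{lem:keep-more}, namely~$(1 - e^{-T})^{\log M}$, $1/\sqrt M$ and~$\exp (- \tfrac{1}{8} e^{-T} \sqrt M)$, all tend to zero, hence are each below a prescribed fraction of~$\ep$ for all~$M$ large.

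Next I would chain the four events. Writing~$D_5 = \{\Sigma_T^1 (B_1) > \log M \}$ for the conclusion of Lemma~\ref{lem:keep-more}, the telescoping bound
$$ P (D_2 \cap D_3 \cap D_4 \cap D_5 \mid D_1) \ \geq \ 1 - \sum_{k=1}^{4} P (D_{k+1}^c \mid D_1 \cap \cdots \cap D_k) $$
combined with the four displayed estimates shows that, conditionally on there being at least~$\log M$ players of strategy~1 in~$B_0$ at time~$0$, strategy~1 produces at least~$\log M$ players in~$B_1$ at time~$T$ outside an event of probability at most a fixed fraction of~$\ep$. By the exchange symmetry of the dynamics the same conclusion holds for strategy~2, and by the reflection~$x \mapsto -x$ it holds for the spreading of both strategies into~$B_{-1}$ as well. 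Taking the per-event budget small enough, a union bound over these finitely many bad events then gives that, whenever a site~$(z,n)$ is good, both of its forward neighbours~$(z \pm 1, n + 1)$ carry at least~$\log M$ players of each type at time~$(n + 1) T$, i.e. are good, with conditional probability at least~$1 - \ep$.

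The last point is to upgrade this one-step estimate to the domination~$W_0^{\ep} \subset X_0 \Rightarrow \{z \in W_n^{\ep}\} \subset \{z \in X_n\}$, and hence to the stated inequality of probabilities. Here I would observe that each good event and each spreading mechanism of Lemmas~\ref{lem:keep-one}--\ref{lem:keep-more} is measurable with respect to the Harris graphical representation, that is the clocks~$N_t$ and the coins~$B_n$, restricted to the time slab~$[nT, (n+1) T]$ and to a spatial window of width~$O(M)$ about~$B_z$: over the bounded horizon~$T$ the rightmost type~1 player performs only~$O(1)$ jumps of length~$M$, which is exactly the role of the almost surely finite time~$\tau_9$ appearing in Lemma~\ref{lem:move}, so the spreading remains confined and the rare overflow is absorbed into the~$\ep$-budget. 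Consequently the field of good sites is one-dependent with marginal at least~$1 - \ep$, and the standard comparison between such a field and one-dependent oriented site percolation yields the desired pathwise inclusion. The main obstacle is precisely this localization step: the influence in the particle system is not genuinely finite-range, so one must argue that over the fixed time~$T$ the determining region of each block is contained in a bounded box with overwhelming probability, which is what legitimises the one-dependence, rather than true independence, of the block events.
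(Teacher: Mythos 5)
Your proposal is correct and follows essentially the same route as the paper's proof: fix $T$ via Lemma~\ref{lem:move} (uniformly in $M$), take $M$ large so that the bounds of Lemmas~\ref{lem:keep-one}, \ref{lem:spread} and~\ref{lem:keep-more} vanish, chain the four events (the paper uses the multiplication rule where you use a telescoping union bound, with the same factor-of-two symmetrization over the two strategies and the spatial reflection giving both neighbours $(z \pm 1, n+1)$), and conclude by the standard block comparison with one-dependent oriented site percolation. The only point where your justification is less clean than the paper's is the last step: rather than a with-high-probability localization of each block's determining region (and the claim that the good field itself is one-dependent, which is not literally true, since goodness propagates from level to level), the paper observes that the proofs of Lemmas~\ref{lem:keep-one}--\ref{lem:keep-more} already give bounds holding uniformly over all configurations outside the space-time box $(B_z \cup B_{z \pm 1}) \times [nT, (n+1)T]$, which is exactly the hypothesis of \cite[Theorem~4.3]{durrett_1995} and yields the coupling with $P (W_n^{\ep} \subset X_n) = 1$ directly.
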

\begin{proof}
 Fix~$T > 0$ large such that Lemma~\ref{lem:move} holds.
 Then, noting that~$D_1$ is the event that site~$(0, 0)$ is good and using the multiplication rule, we get
 $$ \begin{array}{l}
     P ((1, 1) \ \hbox{is good} \,| \,(0, 0) \ \hbox{is good}) \vspace*{4pt} \\
       \hspace*{20pt} \geq \ P (\Sigma_T (B_1) > \log (M) \ \hbox{and} \ D_2 \cap D_3 \cap D_4 \,| \,D_1) \vspace*{4pt} \\
       \hspace*{20pt}   =  \ P (D_2 \,| \,H_1) \,P (D_3 \,| \,H_2) \,P (D_4 \,| \,H_3) \,P (\Sigma_T^1 (B_1) > \log (M) \,| \,H_4) \end{array} $$
 where~$H_i = D_1 \cap \cdots \cap D_i$ for~$i = 1, 2, 3, 4$.
 Applying Lemmas~\ref{lem:keep-one}--\ref{lem:keep-more}, and observing that, since the condition on the payoffs is symmetric, each of these four lemmas also holds after exchanging the
 role of the two strategies, we deduce that
 $$ \begin{array}{l}
     P ((1, 1) \ \hbox{is good} \,| \,(0, 0) \ \hbox{is good}) \vspace*{4pt} \\
       \hspace*{20pt} \geq \ P (\Sigma_t (B_0) \neq 0 \ \hbox{for all} \ t \in (0, T) \,| \,H_1) \ P (T (A_1) \leq T/2 \,| \,H_2) \vspace*{4pt} \\
       \hspace*{40pt}        P (\Sigma_t (B_1) > \sqrt M \ \hbox{for some} \ t \in (0, T) \,| \,H_3) \vspace*{4pt} \\
       \hspace*{60pt}        P (\Sigma_T (B_1) > \log (M) \,| \,H_4) \vspace*{4pt} \\
       \hspace*{20pt} \geq \ (1 - 2 \,(1 - e^{-T})^{\log (M)})(1 - \ep/4) \vspace*{4pt} \\
       \hspace*{40pt}        (1 - 2 / \sqrt M)(1 - 2 \,\exp (- (1/8) \,e^{-T} \,\sqrt M)) \geq (1 - \ep/4)^4 \geq 1 - \ep \end{array} $$
 for all~$M$ large.
 By translation invariance of the graphical representation and invariance by spatial symmetry of the evolution rules, we have more generally
 $$ P ((z \pm 1, n + 1) \ \hbox{is good} \,| \,(z, n) \ \hbox{is good}) \geq 1 - \ep $$
 for all~$M$ large.
 Since also the proof of Lemmas~\ref{lem:keep-one}--\ref{lem:keep-more} shows that this lower bound holds uniformly in all the configurations outside the space-time region
 $$ (B_z \cup B_{z \pm 1}) \times [nT, (n + 1) T], $$
 we deduce from~\cite[Theorem 4.3]{durrett_1995} the existence of a coupling between the particle system and oriented site percolation such that
 $$ P (W_n^{\ep} \subset X_n) = 1 \quad \hbox{whenever} \quad W_0^{\ep} \subset X_0 $$
 from which the lemma follows directly.
\end{proof} \\ \\
 To deduce coexistence, we fix~$\ep > 0$ sufficiently small to make the oriented percolation process supercritical.
 Observing also that, starting from a product measure with a positive density of both strategies, the number of good sites at level zero is almost surely infinite, we obtain
\begin{equation}
\label{eq:coexistence-1}
  \begin{array}{l} \lim_{n \to \infty} P (0 \in W_{2n}^{\ep} \,| \,W_0^{\ep} = X_0) = \alpha > 0. \end{array}
\end{equation}
 The parameter~$\ep$ being fixed, we now fix~$T > 0$ as in Lemma~\ref{lem:perco} so that each wet site is also good for the coupling defined in the proof.
 Then, the definition of a good site implies that there exists a positive constant~$\beta$ such that
\begin{equation}
\label{eq:coexistence-2}
  P (\xi_{(2n + 1) T} (0) \neq \xi_{(2n + 1) T} (y - x) \,| \,0 \in W_{2n}^{\ep}) \geq \beta > 0
\end{equation}
 for all~$x \neq y$ and all~$M$ large.
 Combining~\eqref{eq:coexistence-1}--\eqref{eq:coexistence-2} and using again translation invariant in space, we conclude that
 $$ \begin{array}{l}
    \liminf_{t \to \infty} P (\xi_t (x) \neq \xi_t (y)) \vspace*{4pt} \\ \hspace*{20pt} = \
    \liminf_{n \to \infty} P (\xi_{(2n + 1) T} (0) \neq \xi_{(2n + 1) T} (y - x)) \vspace*{4pt} \\ \hspace*{20pt} \geq \
    \liminf_{n \to \infty} P (\xi_{(2n + 1) T} (0) \neq \xi_{(2n + 1) T} (y - x) \,| \,0 \in W_{2n}^{\ep}) \vspace*{4pt} \\ \hspace*{40pt} P (0 \in W_{2n}^{\ep} \,| \,W_0^{\ep} = X_0) \vspace*{4pt} \\ \hspace*{20pt} \geq \
    \alpha \,\beta > 0 \end{array} $$
 which completes the proof of the theorem.

\section{The one-dimensional nearest neighbor process}
\label{sec:1D}

\noindent This section is devoted to the proof of Theorems~\ref{th:1D-gold} and~\ref{th:1D}.
 In particular, we assume throughout this section that~$M = d = 1$. \\
\indent The proof of Theorem~\ref{th:1D-gold}, which focuses on the parameter region~\eqref{eq:1D-gold-1}, mainly relies on techniques from the theory of martingales.
 To begin with, we consider the process starting with only type~1 players to the left of and at the origin, i.e.,
 $$ \xi_0 \in \Lambda_- = \{\xi \in \{1, 2 \}^{\Z} : \xi (x) = 1 \ \hbox{for all} \ x \leq 0 \}. $$
 Our estimates hold uniformly in all possible~$\xi_0 \in \Lambda_-$. Let
 $$ X_t = \inf \,\{z \in \Z : \xi_t (z) = 2 \} - 1 \quad \hbox{and} \quad K_t = \inf \,\{z > 0  : \xi_t (X_t + z) = 1 \} $$
 be respectively the position of the rightmost type~1 player with only type~1 players to her left and the distance between this player and the next type~1 player to her right,
 as shown in the following picture.
\begin{center}
\scalebox{0.50}{\input{half-line.pstex_t}}
\end{center}
 The first step of the proof is given by the following lemma which exhibits a supermartingale using the golden ratio~$\phi = (1/2)(1 + \sqrt 5)$.
\begin{figure}[t]
 \centering
 \scalebox{0.50}{\input{drift-1D.pstex_t}}
 \caption{\upshape Payoff landscape around~$X_t$ for different values of~$K_t$.}
\label{fig:drift-1D}
\end{figure}
\begin{lemma}
\label{lem:1D-super}
 Assume~\eqref{eq:1D-gold-1}.
 Then, $Z_t = \phi^{- X_t}$ is a supermartingale with respect to the natural filtration of the death and birth of the fittest process.
\end{lemma}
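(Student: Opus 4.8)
The plan is to verify the supermartingale property infinitesimally, by computing the generator of the process applied to the function $\xi \mapsto \phi^{-X(\xi)}$ and checking that it is nonpositive in every configuration carrying a type-$2$ player. The first observation is that $X_t$ is governed entirely by the behavior of two sites. Every vertex $z \le X_t - 1$ has both neighbors in state~$1$, hence no type-$2$ neighbor and $\Phi_2 (z, \xi_t) = - \infty$, so it can never switch to~$2$; this freezes the all-$1$ region strictly to the left of $X_t$. Consequently $X_t = \inf \{z : \xi_t (z) = 2 \} - 1$ can change only through an update at the boundary site $X_t$ (a switch $1 \to 2$, moving $X_t$ one step left) or at the first type-$2$ site $X_t + 1$ (a switch $2 \to 1$, moving $X_t$ right). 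I would organize the computation according to the value of $K_t$, since the transition rates at these two sites depend only on the local pattern near $X_t$ recorded in Figure~\ref{fig:drift-1D}.

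Second, I would dispatch the easy regime $K_t \ge 3$. There site $X_t$ has a type-$1$ neighbor of payoff $a_{11}$ while its unique type-$2$ neighbor $X_t + 1$ has payoff $(a_{21} + a_{22})/2$; since \eqref{eq:1D-gold-1} gives $2 a_{11} > a_{21} + a_{22}$, we get $\Phi_1 (X_t) > \Phi_2 (X_t)$, so $X_t$ never flips and cannot decrease. Meanwhile site $X_t + 1$ has type-$1$ payoff $(a_{11} + a_{12})/2$ and type-$2$ payoff $(a_{21} + a_{22})/2$ (if $K_t = 3$) or $a_{22}$ (if $K_t \ge 4$); the first inequality of \eqref{eq:1D-gold-1}, which yields both $a_{11} + a_{12} > a_{21} + a_{22}$ and $a_{11} + a_{12} > 2 a_{22}$, forces this site to switch to~$1$ at rate one, moving $X_t$ one step right. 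Thus the generator equals $\phi^{-X_t} (\phi^{-1} - 1) < 0$, comfortably negative.

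Third — and this is the crux — I would treat the boundary regime $K_t = 2$, where the single type-$2$ site $X_t + 1$ sits between two type-$1$ players. Site $X_t + 1$ then has no type-$2$ neighbor and flips to~$1$ at rate one, but because $X_t + 2$ is already type~$1$ the resulting rightward jump of $X_t$ has some size $J \ge 2$ whose exact value depends on the uncontrolled configuration further right. Simultaneously site $X_t$ may flip to~$2$: its type-$1$ and type-$2$ payoffs are $a_{11}$ and $a_{21}$, so the leftward jump occurs at rate $c_A = \ind \{a_{11} < a_{21} \} + (1/2) \ind \{a_{11} = a_{21} \} \le 1$. The generator therefore equals $\phi^{-X_t} \big[ c_A (\phi - 1) + (\phi^{-J} - 1) \big]$, and using $c_A \le 1$ together with $\phi^{-J} \le \phi^{-2}$ it is bounded above by $\phi^{-X_t} \big[ (\phi - 1) + (\phi^{-2} - 1) \big] = \phi^{-X_t} \big[ \phi + \phi^{-2} - 2 \big]$. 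The defining relation $\phi^2 = \phi + 1$, equivalently $\phi^{-2} = 2 - \phi$, makes the bracket vanish, so the generator is $\le 0$. This is precisely where the golden ratio is forced on us: it is the unique base for which a worst-case leftward step of size~$1$ taken at the maximal rate is exactly cancelled by the minimal rightward step of size~$2$.

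Combining the two regimes gives $\mathcal L (\phi^{-X}) \le 0$ in every relevant configuration, hence $Z_t = \phi^{-X_t}$ is a supermartingale, integrability being immediate since $X_t$ decreases by at most one per jump. I expect the only real obstacle to be the regime $K_t = 2$: one must recognise that the rightward jump cannot be smaller than~$2$ and that this, weighed against a possible rate-one leftward jump, is balanced exactly by the golden ratio identity. The remaining cases reduce to reading off the payoff comparisons implied by \eqref{eq:1D-gold-1}.
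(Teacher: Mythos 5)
Your proposal is correct and follows essentially the same route as the paper: an infinitesimal drift computation for $\phi^{-X_t}$ organized by cases on $K_t$, with the boundary case $K_t = 2$ (rightward jump of size at least two at rate one versus a leftward unit jump at rate at most one) closed by the golden ratio identity $\phi^{-2} + \phi - 2 = 0$, and the cases $K_t \geq 3$ handled by the payoff comparisons $2a_{11} > a_{21} + a_{22}$ and $a_{11} + a_{12} > \max(a_{21} + a_{22}, 2a_{22})$ drawn from~\eqref{eq:1D-gold-1}. Your added observations (the freezing of the all-$1$ region left of $X_t$, and the integrability remark) are correct refinements of points the paper leaves implicit, not a different argument.
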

\begin{proof}
 For every set~$A$ and integer~$i$, we set
 $$ \begin{array}{l} p (\xi_t, A, i) = \lim_{s \to 0} s^{-1} \,P (X_{t + s} - X_t \in A \,| \,\xi_t \ \hbox{and} \ K_t = i). \end{array} $$
 Note that~$X_t$ can only move by one unit except when the distance~$K_t$ to the next player following strategy~1 is two, in which case it can make arbitrarily large jumps to the right.
 Using Figure~\ref{fig:drift-1D}, which shows the payoffs that are relevant to compute the rate of change of the process for different values of~$K_t$, we obtain
\begin{equation}
\label{eq:1D-super-1}
 \begin{array}{rcl}
   p (\xi_t, \{2, 3, \ldots \}, 2) & = & 1 \vspace*{4pt} \\
                  p (\xi_t, -1, 2) & = & \ind \{a_{11} < a_{21} \} + (1/2) \,\ind \{a_{11} = a_{21} \} \leq 1 \end{array}
\end{equation}
 with probability one.
 Similarly, since~\eqref{eq:1D-gold-1} holds,
\begin{equation}
\label{eq:1D-super-2}
  \begin{array}{l}
   p (\xi_t, +1, 3) = \ind \{a_{11} + a_{12} > a_{21} + a_{22} \} \vspace*{4pt} \\ \hspace*{80pt} + \ (1/2) \,\ind \{a_{11} + a_{12} = a_{21} + a_{22} \} = 1 \vspace*{4pt} \\
   p (\xi_t, -1, 3) = \ind \{2 a_{11} < a_{21} + a_{22} \} \vspace*{4pt} \\ \hspace*{80pt} + \ (1/2) \,\ind \{2 a_{11} = a_{21} + a_{22} \} = 0 \end{array}
\end{equation}
 with probability one, while for all~$i > 3$,
\begin{equation}
\label{eq:1D-super-3}
  \begin{array}{l}
   p (\xi_t, +1, i) = \ind \{a_{11} + a_{12} > 2 a_{22} \} \vspace*{4pt} \\ \hspace*{100pt} + \ (1/2) \,\ind \{a_{11} + a_{12} = 2 a_{22} \} = 1 \vspace*{4pt} \\
   p (\xi_t, -1, i) = \ind \{2 a_{11} < a_{21} + a_{22} \} \vspace*{4pt} \\ \hspace*{100pt} + \ (1/2) \,\ind \{2 a_{11} = a_{21} + a_{22} \} = 0 \end{array}
\end{equation}
 with probability one.
 From~\eqref{eq:1D-super-1}, we deduce that
 $$ \begin{array}{l}
    \lim_{s \to 0} \,s^{-1} \,E \,(Z_{t + s} - Z_t \,| \,\xi_t \ \hbox{and} \ K_t = 2) \vspace*{4pt} \\
    \hspace*{25pt} = \ \lim_{s \to 0} \,s^{-1} \,E \,(\phi^{- X_{t + s}} - \phi^{- X_t} \,| \,\xi_t \ \hbox{and} \ K_t = 2) \vspace*{4pt} \\
    \hspace*{25pt} \leq \  \phi^{- X_t} \,(\phi^{-2} - 1) \,\lim_{s \to 0} \,s^{-1} \,P (X_{t + s} \geq X_t + 2 \,| \,\xi_t \ \hbox{and} \ K_t = 2) \vspace*{4pt} \\
    \hspace*{50pt} + \ \phi^{- X_t} \,(\phi - 1) \,\lim_{s \to 0} \,s^{-1} \,P (X_{t + s} = X_t - 1 \,| \,\xi_t \ \hbox{and} \ K_t = 2) \vspace*{4pt} \\
    \hspace*{25pt} \leq \ Z_t \,((\phi^{-2} - 1) + (\phi - 1)) = Z_t \,(\phi^{-2} + \phi - 2) = 0. \end{array} $$
 Similarly, combining~\eqref{eq:1D-super-2}--\eqref{eq:1D-super-3}, we get that, for all~$i > 2$,
 $$ \begin{array}{l}
    \lim_{s \to 0} \,s^{-1} \,E \,(Z_{t + s} - Z_t \,| \,\xi_t \ \hbox{and} \ K_t = i) \vspace*{4pt} \\
    \hspace*{25pt} = \ \phi^{- X_t} \,(\phi^{-1} - 1) \,\lim_{s \to 0} \,s^{-1} \,P (X_{t + s} = X_t + 1 \,| \,\xi_t \ \hbox{and} \ K_t = i) \vspace*{4pt} \\
    \hspace*{50pt} + \ \phi^{- X_t} \,(\phi - 1) \,\lim_{s \to 0} \,s^{-1} \,P (X_{t + s} = X_t - 1 \,| \,\xi_t \ \hbox{and} \ K_t = i) \vspace*{4pt} \\
    \hspace*{25pt} = \ (\phi^{-1} - 1) \,Z_t \leq 0. \end{array} $$
 This shows that the process~$(Z_t)$ is a supermartingale.
\end{proof}
\begin{lemma}
\label{lem:stopping}
 Assume~\eqref{eq:1D-gold-1}. Then,
 $$ P (X_t \geq 0 \ \hbox{for all} \ t) \geq 1 - \phi^{-1} \quad \hbox{for all} \quad \xi_0 \in \Lambda_-. $$
\end{lemma}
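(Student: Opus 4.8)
The plan is to combine the supermartingale property established in Lemma~\ref{lem:1D-super} with an optional stopping argument. First I would record two elementary facts about the process~$(X_t)$. Since~$\xi_0 \in \Lambda_-$ forces~$\xi_0 (x) = 1$ for all~$x \leq 0$, the leftmost type~2 player sits at a site~$\geq 1$, so~$X_0 \geq 0$ and therefore~$Z_0 = \phi^{- X_0} \leq 1$ uniformly over~$\Lambda_-$. Second, the rate computation~\eqref{eq:1D-super-1}--\eqref{eq:1D-super-3} underlying Lemma~\ref{lem:1D-super} shows that the only downward moves of~$(X_t)$ are jumps of size exactly one, occurring at positive rate only when~$K_t = 2$, while every other transition keeps~$X_t$ in place or moves it to the right. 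Consequently, starting from~$X_0 \geq 0$, the process cannot overshoot~$0$ on the way down: the first time it becomes negative it equals~$-1$.

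This reduces the statement to controlling a first-passage probability. I would set
$$ \tau = \inf \,\{t : X_t = -1 \} = \inf \,\{t : X_t < 0 \}, $$
so that~$\{X_t \geq 0 \ \hbox{for all} \ t \} = \{\tau = \infty \}$, and observe that on~$\{\tau < \infty \}$ we have~$Z_\tau = \phi^{-(-1)} = \phi$ exactly, again because~$X$ reaches~$-1$ from~$0$ by a unit jump. The tool is then the optional stopping theorem applied to the nonnegative supermartingale~$(Z_t)$ at the bounded stopping times~$\tau \wedge n$, which yields
$$ E \,(Z_{\tau \wedge n}) \leq E \,(Z_0) \leq 1. $$

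To extract the bound I would discard the nonnegative contribution of~$\{\tau > n \}$ and keep only the event~$\{\tau \leq n \}$, on which~$Z_{\tau \wedge n} = Z_\tau = \phi$, giving~$\phi \,P (\tau \leq n) \leq E \,(Z_{\tau \wedge n}) \leq 1$ and hence~$P (\tau \leq n) \leq \phi^{-1}$. Letting~$n \to \infty$ and using monotone convergence gives~$P (\tau < \infty) \leq \phi^{-1}$, and therefore
$$ P (X_t \geq 0 \ \hbox{for all} \ t) = 1 - P (\tau < \infty) \geq 1 - \phi^{-1} $$
uniformly in~$\xi_0 \in \Lambda_-$, as claimed; this is precisely the supermartingale maximal inequality~$P (\sup_t Z_t \geq \phi) \leq E \,(Z_0) / \phi$ in disguise.

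The only genuinely delicate points are technical probabilistic ones rather than combinatorial. One must check that the optional stopping theorem is legitimately applied in continuous time, namely right-continuity of the paths and integrability of~$Z_{\tau \wedge n}$, both of which hold since~$Z$ is a nonnegative c\`adl\`ag jump process and the stopping times are bounded. The other point to justify carefully is the claim that~$X$ has no downward jumps of size larger than one, so that~$Z_\tau = \phi$ exactly and the first negative value is~$-1$; this is exactly where the explicit transition rates from Lemma~\ref{lem:1D-super} are essential, and I expect it to be the main thing to argue with care before the martingale estimate goes through cleanly.
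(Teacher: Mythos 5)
Your proposal is correct, and it rests on exactly the same two pillars as the paper's proof: the golden-ratio supermartingale $Z_t = \phi^{-X_t}$ of Lemma~\ref{lem:1D-super}, and the structural fact that under~\eqref{eq:1D-gold-1} the process~$(X_t)$ moves down only by unit jumps (only the player at~$X_t$ has a type~2 neighbor, so no site strictly to her left can flip), which guarantees both that~$\{X_t \geq 0 \ \hbox{for all} \ t\}$ coincides with~$\{\tau_{-1} = \infty\}$ and that~$Z_{\tau_{-1}} = \phi$ exactly. The only difference is the stopping scheme. The paper stops~$Z$ at the two-sided exit time~$T_n = \min(\tau_{-1}, \tau_n)$, where~$\tau_n$ is the hitting time of~$[n, \infty)$; this forces it to argue that~$T_n$ is almost surely finite (using that~$X_t$ jumps right at rate one) and then to pass to the limit over the decreasing events~$\{T_n = \tau_n\}$. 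You instead cap the one-sided hitting time~$\tau = \tau_{-1}$ at deterministic times~$n$, i.e., you run the supermartingale maximal inequality~$P(\sup_t Z_t \geq \phi) \leq E(Z_0)/\phi$, which avoids both the finiteness claim and the event-monotonicity step; since the stopped process~$Z_{t \wedge \tau}$ is bounded by~$\phi$, optional stopping applies with no integrability worries. Your version is marginally leaner for the stated bound. The paper's two-sided formulation has the side benefit of bounding the probability of the stronger event that~$X_t$ stays nonnegative \emph{and} escapes to~$+\infty$, which is the form of the statement implicitly invoked later when the authors deduce from Theorem~\ref{th:1D-gold} that a row of~1s ``expands without bound'' in the proof of Theorem~\ref{th:1D}.1; if you ever need that stronger conclusion, your argument would need the extra escape step restored.
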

\begin{proof}
 The idea is to apply the optional stopping theorem to~$(Z_t)$, the supermartingale found in the previous lemma, and to the stopping times
 $$ \tau_{-1} = \inf \,\{t : X_t = - 1 \} \quad \hbox{and} \quad \tau_n = \inf \,\{t : X_t \geq n \} \ \hbox{for all} \ n > 0. $$
 Note that~$T_n = \min \,(\tau_{-1}, \tau_n)$ is an almost surely finite stopping time because, regardless of the configuration of the system, the process~$(X_t)$ jumps to the right at rate one.
 Since in addition the initial position~$X_0 \geq 0$, the optional stopping theorem implies that
\begin{equation}
\label{eq:stopping-1}
  \begin{array}{rcl}
    1 & \geq & \phi^{- X_0} = E \,(Z_0) \geq E \,(Z_{T_n}) \vspace*{4pt} \\
      & \geq & E \,(Z_{T_n} \,| \,T_n = \tau_{-1}) \,P (T_n = \tau_{-1}) \vspace*{4pt} \\ && \hspace*{50pt} + \ E \,(Z_{T_n} \,| \,T_n = \tau_n) \,P (T_n = \tau_n) \vspace*{4pt} \\
      & \geq & \phi \,P (T_n = \tau_{-1}) = \phi \,(1 - P (T_n = \tau_n)). \end{array}
\end{equation}
 Now, using that the event~$\{T_n = \tau_n \}$ is nonincreasing in~$n$ for the inclusion, we also deduce from the monotone convergence theorem that
\begin{equation}
\label{eq:stopping-2}
  \begin{array}{rcl}
    P (X_t \geq 0 \ \hbox{for all} \ t) & \geq & P (X_t \geq 0 \ \hbox{for all} \ t \ \hbox{and} \ \lim_{t \to \infty} X_t = \infty) \vspace*{4pt} \\
                                          & \geq & P (T_n = \tau_n \ \hbox{for all} \ n > 0) \vspace*{4pt} \\
                                          &   =  & \lim_{n \to \infty} \,P (T_n = \tau_n). \end{array}
\end{equation}
 Combining~\eqref{eq:stopping-1}--\eqref{eq:stopping-2}, we conclude that, for all~$\xi_0 \in \Lambda_-$,
 $$ \begin{array}{rcl}
     P (X_t \geq 0 \ \hbox{for all} \ t) & \geq & \lim_{n \to \infty} \,P (T_n = \tau_n) \vspace*{4pt} \\
                                         & \geq & 1 - \phi^{-1} = (1/2)(3 - \sqrt 5), \end{array} $$
 which completes the proof.
\end{proof} \\ \\
 We now consider the process starting from a finite interval of type~1 players centered at the origin while the rest of the initial configuration is arbitrary.
 The basic idea to deduce Theorem~\ref{th:1D-gold} is to study the type~1 connected component starting at the origin, observing that the right and left boundaries of this component are described
 respectively by the process~$(X_t)$ and its mirror image. \\ \\
\begin{demo}{Theorem~\ref{th:1D-gold}}
 To begin with, we define formally the type~1 connected component starting at the origin as
 $$ C_1 (0) = \{(y, t) \in \Z \times \R_+ : (x, 0) \uparrow (y, t) \} $$
 where~$(x, 0) \uparrow (y, t)$ means that there exist
 $$ x = x_0, x_1, \ldots, x_n = y \quad \hbox{and} \quad 0 = t_0 \leq t_1 \leq \cdots \leq t_{n + 1} = t $$
 such that the following two conditions hold:
\begin{itemize}
 \item we have~$|x_i - x_{i + 1}| = 1$ for all~$i = 0, 1, \ldots, n - 1$ and \vspace*{4pt}
 \item we have~$\xi_s (z) = 1$ for all~$(z, s) \in \bigcup_{i = 0, 1, \ldots, n} (\{x_i \} \times [t_j, t_{j + 1}])$.
\end{itemize}
 In words, we can move forward in time from point~$(x, 0)$ to point~$(y, t)$ while staying in a space-time region occupied by type~1 players.
 Then, define the left and right boundaries of this connected component as
 $$ X_t^- = \inf \,\{z : (z, t) \in C_1 (0) \} \quad \hbox{and} \quad X_t^+ = \sup \,\{z : (z, t) \in C_1 (0) \} $$
 with the convention~$\inf (\varnothing) = \infty$ and~$\sup (\varnothing) = - \infty$.
 Then, it directly follows from Lemma~\ref{lem:stopping} that, for every integer~$m \geq 1$,
 $$ \begin{array}{l}
    \inf_{\xi_0 \in \Lambda_m} P (\xi_t \in \Lambda_m \ \hbox{for all} \ t) \vspace*{4pt} \\ \hspace*{15pt} = \
    \inf_{\xi_0 \in \Lambda_m} P (X_t^- \leq - m \ \hbox{and} \ X_t^+ \geq m \ \hbox{for all} \ t) \vspace*{4pt} \\ \hspace*{15pt} = \
    \inf_{\xi_0 \in \Lambda_-} P (X_t \geq 0 \ \hbox{for all} \ t) \ \inf_{- \xi_0 \in \Lambda_-} P (- X_t \leq 0 \ \hbox{for all} \ t) \vspace*{4pt} \\ \hspace*{15pt} = \
    (\inf_{\xi_0 \in \Lambda_-} P (X_t \geq 0 \ \hbox{for all} \ t))^2 \geq (1 - \phi^{-1})^2 = (1/2)(7 - 3 \sqrt 5). \end{array} $$
 This completes the proof of the theorem.
\end{demo} \\ \\
 As previously mentioned, when starting with a translation invariant product measure with a positive density of each of the two strategies, since the initial configuration contains infinitely
 many intervals with at least three type~1 players, it directly follows from Theorem~\ref{th:1D-gold} and the ergodic theorem that strategy~1 wins with probability one in the parameter
 region~\eqref{eq:1D-gold-1}.
 Using also obvious symmetry shows the first two parts of Theorem~\ref{th:1D}. \\
\indent We now focus on the last part of the theorem, which looks at the parameter region where the process fixates to a random configuration in which both strategies are present.
 The basic idea is to prove the existence of patterns with a mixture of both types of players that are stable under the dynamics of the process. \\ \\
\begin{demo}{Theorem~\ref{th:1D}.3}
 Since both the evolution rules of the process and the initial configuration are translation invariant, the function
 $$ u (t) = P (\xi_t (x) - \xi_t (x - 1) \neq 0) \quad \hbox{for all} \quad (x, t) \in \Z \times \R_+ $$
 does not depend on the choice of~$x$.
 The function~$u$ can be viewed as the density of interfaces.
 Because interfaces annihilate but do not give birth, the function~$u$ is non-increasing so it has a limit~$l$ as time goes to infinity.
 We start with some definitions and look at the three sub-regions in~\eqref{eq:1D-fixation} separately. \\
\indent By definition, a {\bf block} is an interval
 $$ \{x, x + 1, \dots, x + k \} \subset \Z $$
 and its {\bf length} is $k + 1$.
 We call this a {\bf block of~1s} for a configuration~$\xi$ if
 $$ \xi(x) = \xi (x + 1) = \cdots = \xi (x + k) = 1 $$
 and a {\bf maximal block of~1s} if in addition
 $$ \xi (x - 1) = \xi (x + k + 1) = 2. $$
 The analogous definition applies to~2s.
 A {\bf pattern} is a finite configuration
 $$ \sigma = (\sigma_0, \sigma_1, \dots, \sigma_k) \subset \{1, 2 \}^{k + 1} $$
 and we call this pattern {\bf stable} if
 $$ (\xi_t (x + i))_{i = 0}^k = \sigma \quad \hbox{implies} \quad (\xi_s (x + i))_{i = 0}^k = \sigma \ \ \hbox{for all} \ \ s > t.$$
{\bf Case 1.} Assume that
 $$ 2a_{11} > a_{21} + a_{22} \quad \hbox{and} \quad 2a_{22} > a_{11} + a_{12}. $$
 By symmetry, we may assume $a_{21}+a_{22} \ge a_{11}+a_{12}$, which combining with the above implies $a_{11}>a_{12}$. 
 We claim the patterns~$\sigma$ with
 $$ \sigma_i = 2 \ \ \hbox{for} \ \ i = 0, 1, \dots, k \ \ \hbox{and} \ \ k \geq 2 $$
 are stable.
 To see this, first note the transition rates at vertex~$x$ depend only on the states at that vertex and its four nearest neighbors.
 Let $I = \{x, \dots, x + k \}$ be a maximal block of~2s for~$\xi_t$ with~$k \geq 2$.
 Since~$\xi_t (x - 1) = 1$,
 $$ \phi (x - 1, \xi_t) \leq a_{12} + \max \,(a_{11}, a_{12}) \leq a_{11} + a_{12} < 2 a_{22} = \phi (x + 1, \xi_t) $$
 therefore~$\xi_t (x)$ has flip rate zero;
 by symmetry, the same holds for~$\xi_t (x + k)$ while this is trivially true for the other sites in the block because they are in the same state as their neighbors.
 The claim follows. \\
\indent Since the initial configuration~$\xi_0$ is distributed according to a product measure with a positive density of~2s, with probability one, the lattice~$\Z$ is partitioned by~$\xi_0$
 into a succession of maximal blocks of~2s, say~$(I_j)_{j \in \Z}$, of length at least three, separated by finite blocks~$(K_j)_{j \in \Z}$ of length at least one.
 Since the~$I_j$ are stable and longer than the range of dependence, the restriction of the process~$(\xi_t)$ on each of the finite blocks~$K_j$, independently for different~$j$, evolves like a
 finite state Markov chain on the state space of patterns with length~$\card (K_j) + 4$, with a fixed pair of type~2 sites at each boundary.
 Thus, fixation follows once we show that for any fixed finite length, such a chain is absorbing, and to do so we need to show that for any finite length pattern there is a sequence of flips,
 each with positive rate, that leads to an absorbing configuration.
 We do this by sequentially removing maximal blocks of certain types, until only stable types remain. \\
\indent For a maximal block of length one, both neighbors are of opposite type, which makes the flip rate positive.
 Thus, by flipping the type of all such blocks, we may assume that maximal blocks all have length at least two.
 Once this is done,
\begin{enumerate}
 \item maximal blocks of~1s of length two can be removed while \vspace*{2pt}
 \item maximal blocks of~1s of length at least three are stable
\end{enumerate}
 To see the first statement, note that in the pattern
 $$ \sigma = (2, 2, 1, 1, 2, 2), $$
 the central~1s both have positive flip rate since, under our assumption above, their type~1 neighbor has
 payoff~$a_{11} + a_{12} \leq a_{21} + a_{22}$, the payoff of their type~2 neighbor.
 For the second statement, it suffices to show the leftmost~1 in
 $$ \sigma = (2, 2, 1, 1, 1) $$
 has flip rate zero, which is clear as its type~2 neighbor has payoff~$a_{21} + a_{22} < 2 a_{11}$, the payoff of its type 1 neighbor.
 It remains to consider maximal blocks of length two, with type~2.
 In light of the above, we may assume a pattern of the form
 $$ \sigma = (1, 1, 1, 2, 2, 1, 1, 1), $$
 and that both triplets of~1s are stable.
 Then, by symmetry, each~2 has the same flip rate, which is either positive or zero.
 In the first case the pair of~2s can be removed, otherwise the pattern is stable.
 This completes the argument and proves fixation of the process. \\
\indent It remains to show that the limiting density of both~1s and~2s is positive.
 To do this, let~$\rho$ be the density of~1s in the initial product measure.
 Since blocks of~2s of length at least three are stable, for any~$x \in \Z$,
 $$ \begin{array}{l} \liminf_{t \to \infty} P (\xi_t (x) = 2) \geq P (\xi_0 (x + i) = 2, \ i = 0, 1, 2) = (1 - \rho)^3 > 0. \end{array} $$
 Looking now at type~1, since as shown above, blocks of~1s of length at least three have flip rate zero when the adjacent maximal blocks of~2s have length at least two, by sandwiching a block of~1s
 of length three between two blocks of~2s of length three to ensure stability, we find that
 $$ \begin{array}{l} \liminf_{t \to \infty} P (\xi_t (x) = 1) \geq \rho^3 \,(1 - \rho)^6 > 0. \end{array} $$
 This completes the proof in the first case. \vspace*{4pt} \\
 {\bf Case 2.} Assume that
 $$ 2a_{11} > a_{21} + a_{22} > a_{11} + a_{12}. $$
 If in addition~$2a_{22} > a_{11}+a_{12}$, this becomes a particular case of Case~1 that we have already treated above so we may assume that~$2a_{22} \leq a_{11} + a_{12}$.
 Together with the above, this implies that
 $$ a_{11} > a_{22} \quad \hbox{and} \quad a_{21} > a_{22}  \quad \hbox{and} \quad a_{21} > a_{12}. $$
 First, we claim that
\begin{equation}
\label{eq:claim-1}
  \begin{array}{l}
    (\xi_0 (x + i))_{i = 0}^2 \neq (1, 2, 1) \vspace*{4pt} \\ \qquad \hbox{implies that} \quad (\xi_t (x + i))_{i = 0}^2 \neq (1,2,1) \ \ \hbox{for all} \ \ t > 0. \end{array}
\end{equation}
 Since the transitions
 $$ (1, 1, 1) \to (1, 2, 1) \quad \hbox{and} \quad (2, 2, 2, 1) \to (2, 1, 2, 1) $$
 are not possible, and since the size of a maximal block decreases by at most one at any transition, it suffices to show the transition
 $$ (1, 2, 2, 1) \to (1, 1, 2, 1) $$
 cannot occur.
 This holds because if sites~$x$ and~$x + 1$ are in state~2 while sites~$x - 1$ and~$x + 2$ are in state~2 then
 $$ \phi (x - 1, \xi_t) \leq a_{12} + \max \,(a_{11}, a_{12}) < a_{21} + a_{22} = \phi (x + 1, \xi_t). $$
 In particular, the state at site~$x$ cannot flip.
 Symmetry implies that the same holds for site~$x + 1$ so our first claim~\eqref{eq:claim-1} follows. \\
\indent Next, we claim that for~$k \geq 6$,
\begin{equation}
\label{eq:claim-2}
  \begin{array}{l}
    (\xi_0 (x + i))_{i = 0}^k = (2, 2, 1, 1, \dots, 1, 2, 2) \vspace*{4pt} \\ \qquad \hbox{implies that} \quad (\xi_t (x + i))_{i = 2}^{k - 2} = (1, 1, \dots, 1) \ \ \hbox{for all} \ \ t > 0. \end{array}
\end{equation}
 It follows from~\eqref{eq:claim-1} and our assumption that
 $$ (\xi_t (x + i))_{i = 0}^2 \neq (1,2,1) \quad \hbox{for all} \quad t > 0. $$
 Assuming the conclusion in~\eqref{eq:claim-2} holds at time $t$,
 $$ (\xi_t (x), \xi_t (x + 1)) = (1, 1) \ \hbox{or} \ (2, 1) \ \hbox{or} \ (2, 2). $$
 The flip rate at~$x + 2$ is trivially zero in the first two cases.
 In the case of~$(2, 2)$, the payoff at site~$x + 1$ is~$a_{21} + a_{22} < 2 a_{11}$, the payoff at~$x+3$, so the flip rate is still zero.
 Symmetry completes the argument. \\
\indent As in Case 1, the lattice~$\Z$ is partitioned by~$\xi_0$ into blocks~$(I_j)_{j \in \Z}$ and~$(K_j)_{j \in \Z}$, where in this case, on each block~$I_j$, $\xi_0$ is of the form
 $$ (2, 2, 1, 1, \dots, 1, 2, 2) $$
 and each~$K_j$ is finite.
 We proceed as before to show the chain on~$K_j$ is absorbing.
 As before, we may assume there are no maximal blocks of length one.
 A maximal block of~2s of length greater than two may shrink, since in the pattern
 $$ (1, 1, 2, 2, \dots, 2), $$
 the leftmost~2 has a type~1 neighbor with payoff~$a_{11} + a_{12} \geq 2a_{22}$, the payoff of its type~2 neighbor.
 Thus we may assume all maximal blocks of~2s have length two.
 A maximal block of~1s of length two may shrink, as is easily checked from
 $$ (2, 2, 1, 1, 2, 2), $$
 so we may assume maximal blocks of~1s have length at least three.
 In light of the previous claims, a pattern whose maximal blocks of~2s have length at most two, and whose maximal blocks of~1s have length at least three, is stable, completing the argument. \\
\indent To conclude, it remains to show that the limiting density of each type is positive whenever~$\rho \in (0, 1)$.
 To deal with type~1, since the subset
 $$ (1,1,1) \ \ \hbox{in} \ \ (2, 2, 1, 1, 1, 2, 2) $$
 is stable, we find that
 $$ \begin{array}{l} \liminf_{t \to \infty} P (\xi_t (x) = 1) \geq \rho^3 \,(1 - \rho)^4 > 0. \end{array} $$
 On the other hand, since the subset
 $$ (1, 1, 1, 2, 2, 1, 1, 1) \ \ \hbox{in} \ \ (2, 2, 1, 1, 1, 2, 2, 1, 1, 1, 2, 2) $$
 is stable, we find that
 $$ \begin{array}{l} \liminf_{t \to \infty} P (\xi_t (x) = 2) \geq \rho^6 \,(1 - \rho)^6. \end{array} $$
 This completes the proof of the second case. \vspace*{4pt} \\
{\bf Case 3.} Assume that~$2a_{22} > a_{11} + a_{12} > a_{21} + a_{22}$.
 Fixation in this case can be deduced from the previous case using some obvious symmetry.
\end{demo} \\ \\
 The proof of Theorem~\ref{th:1D} is now complete. \\


\noindent\textbf{Acknowledgment}.
 Eric Foxall was supported in part by an NSERC PDF Award and Nicolas Lanchier by NSA Grant MPS-14-040958.



\end{document}